\newcommand{\LK}{\textbf{LK}}
\newcommand{\LKE}{\textbf{LKE}}
\newcommand{\LKS}{\textbf{LKS}}
\newcommand*{\shifttext}[2]{%
  \settowidth{\@tempdima}{#2}%
  \makebox[\@tempdima]{\hspace*{#1}#2}%
}
\newcommand\fat[1]{\ThisStyle{\ooalign{%
  \kern.46pt$\SavedStyle#1$\cr\kern.33pt$\SavedStyle#1$\cr%
  \kern.2pt$\SavedStyle#1$\cr$\SavedStyle#1$}}}
\author{David M. Cerna\inst{1} and Michael Lettmann\inst{2}}
\authorrunning{D.M. Cerna and M. Lettmann}
\institute{
  Reseach Institute for Symbolic Computation\\
Johannes Kepler University\\
Linz, Austria\\
\email{david.cerna@risc.jku.at}
\and
   Institute of Information Systems\\
   Technische Universit\"at Wien\\
    Vienna, Austria\\
    \email{lettmann@logic.at}
 }
\title{Integrating a Global Induction Mechanism into a Sequent Calculus}
\newcommand{\SchCom}[2]{\ensuremath{\left(\ #1 \ \mbox{{{\Large :}}} \ #2 \ \right)}}
\newcommand{\ComSep}[2]{\ \ensuremath{\displaystyle \prescript{#1}{#2}{\Big| }}\ }
\begin{document}
\maketitle
\begin{abstract}
Most interesting proofs in mathematics contain an inductive argument 
which requires an extension of the LK-calculus to formalize. The most
commonly used calculi for induction contain a separate rule or axiom which 
reduces the valid proof theoretic properties of the calculus. To the best of our
knowledge, there are no such calculi which allow cut-elimination to  
a normal form with the {\em subformula property}, i.e.\ every formula 
occurring in the proof is a subformula of the end sequent. Proof schemata are 
a variant of LK-proofs able to simulate induction by linking proofs together.
There exists a schematic normal form which has comparable proof theoretic behaviour 
to  normal forms with the subformula property. However, a calculus for the 
construction of proof schemata does not exist. In this paper, we introduce 
a calculus for proof schemata and prove soundness and completeness with respect 
to a fragment of the inductive arguments formalizable in {\em Peano arithmetic}. 
\end{abstract}

\section{Introduction}
The schematic construction of objects that form the basis of proof schemata as described in this paper was introduced by V. Aravantinos et al.~\cite{VAravantinos2009,VAravantinos2010,VAravantinos2011,VAravantinos2011a,VAravantinos2013,VAravantinos2011b}. Initially, they considered formulas of an indexed propositional logic with a single {\em free numeric parameter} and with two new logical connectors, i.e. $\vee$-iteration and $\wedge$-iteration. They developed a tableau based decision procedure for the satisfiability of a monadic\footnote{In this fragment the use of schematic constructors is restricted to one free parameter per formula.} fragment of this logic. An extension to a special case of multiple parameters was also investigated by D. Cerna~\cite{DCerna2014}. In a more recent work, V. Aravantinos et al.~\cite{VAravantinos2013} introduced a superposition resolution calculus for a clausal representation of indexed propositional logic. The calculus provided decidability results for an even larger fragment of the monadic fragment. The clausal form allows an easy extension to indexed predicate logic, though all decidability results are lost. In either case, the refutations producible by the calculus for unsatisfiable clause sets is quite restricted. 

Nonetheless, these results inspired investigations into the use of schemata as an alternative formalization of induction to be used for proof analysis and transformation. This is not the first attempt at an alternative formalization of induction, that is with respect to Peano arithmetic~\cite{GTakeuti1975}. However, all existing examples~\cite{JBrotherston2005,JBrotherston2010,RMcdowell1997}, to the best of our knowledge, lack proof normal forms with the {\em subformula property}\footnote{A proof fulfilling the subformula property can be referred to as {\em analytic}.}, i.e. every formula occurring in the proof is a subformula of a formula occurring in the end sequent. What we mean by this is that performing cut-elimination in the presence of induction, for example, to a proof results in a non-analytic proof: some part of the argument is not directly connected to the theorem being proven. Moreover, two important constructions extractable from proofs with the subformula property, {\em Herbrand sequents}~\cite{SHetzl2008,GTakeuti1975} and {\em expansion trees}~\cite{DMiller1987}, are not extractable from proofs within these calculi. While Herbrand sequents allow the representation of the propositional content of first-order proofs, expansion trees generalize Herbrand's theorem and, therefore, Herbrand sequents.

However, schematic formalisms seem to get around this problem. The first proof analysis carried out using a rudimentary schematic formalism was Baaz et al.~\cite{MBaaz2008a}, where proof analysis of F\"{u}rstenberg's proof of the infinitude of primes was successfully performed using CERES~\cite{MBaaz2000}(Cut-Elimination by RESolution). The formalism discussed in this paper is an extension of CERES introduced by C. Dunchev et al.~\cite{CDunchev2014}. It allows the extraction of a Herbrand sequent from the resulting normal form produced by cut-elimination in the presence of induction. Problematically, the method of cut-elimination introduced in~\cite{CDunchev2014} is not known to be complete, in terms of cut-elimination, and is very difficult to use. For an example of the difficulties see Cerna and Leitsch's work~\cite{DCerna2016}. A much improved version of this cut-elimination method has been introduced in~\cite{ALeitsch2017}. using the superposition resolution calculus of~\cite{VAravantinos2013}. The method is complete and always produces a Herbrand sequent, but
it is much weaker than the method of~\cite{CDunchev2014}. Its exact expressive power is not known, but as mentioned,  the superposition resolution calculus' expressive power is quite restricted. The method of~\cite{CDunchev2014}  can formalize proof normal forms with a non-elementary length with respect to the size of the end sequent\footnote{See Orevkov's proof~\cite{VOrevkov1991} or Boolos' proof~\cite{GBoolos1984}. }. 

%In this work we make the first steps toward the proof analysis of more expressive fragments of proof schemata and the unification of various components of the above mentioned procedures by providing a method for the formal construction of proof schemata. 
Currently, proof schemata are defined as an ordered sequence of $\LKS$-proofs \cite{CDunchev2014}. A problem with this construction is that the set of all valid proof schemata is not well defined, nor is the set of all well-formed proof schemata. The $\LKS$-calculus introduces concepts such as {\em links} but does not place restrictions on what a sound application of the rule is, rather an additional construction, {\em proof schemata}, is needed to define sound application of the rule. This leads to a foundational theory which is confusing and hard to work with.  

In this work we present a novel calculus for proof schemata. This calculus implicitly enforces the sound  application of inferences. Moreover, we show completeness with respect to the  {\em $k$-induction} fragment of Peano arithmetic~\cite{ALeitsch2017}. We also introduce a normal form for proofs in our calculus called {\em pre-proof schema normal form} which allows an easy translation to proof schemata. As a conclusion, we discuss the usage of proof schemata as a method of proof storage as well as a few open problems such as multi-parameter extensions and the relationship between the calculus and the cut-elimination method of~\cite{CDunchev2014} in terms of normal form construction. 

The rest of this paper is as follows: In Section~\ref{sec:ProofSchema}, we discuss the necessary background knowledge needed for the results. In Section~\ref{sec:evalInter} we discuss the evaluation and interpretation of proof schemata. In Section~\ref{sec:calculus}, we introduce the concept of the calculus. In Section~\ref{sec:SoundComplete}, we show soundness and completeness of the calculus.   In Section~\ref{sec:Conclusion}, we conclude the paper and mention possible applications, future work, and open problems.

\section{Preliminaries}
\label{sec:ProofSchema}

In this section, we provide a formal construction of proof schemata.

\subsection{Schematic Language}

We work in a two-sorted version of classical first-order logic. The first sort we consider is $\omega$, in which every term normalizes to a {\em numeral}, that is a term inductively constructable by $N \Rightarrow s(N)\ |\ 0$, such that $s(N)\neq 0$ and $s(N)=s(N')\to N=N'$. We will denote numerals by lowercase greek letters, i.e. $\alpha$, $\beta$, $\gamma$, etc. Furthermore, the omega sort includes a countable set of parameter symbols $\mathcal{N}$. For this work, we will only need a single parameter symbol which in most cases we denote by $n$. We use  $k,k'$ to represent numeric expressions containing the parameter. The parameter symbol $n$ will be referred to as the {\em free parameter}.

The second sort $\iota$ (individuals) is a standard first-order term language extended by {\em defined function symbols} and {\em schematic variable symbols}. To distinguish defined and uninterpreted function symbols we partition the functions of $\iota$ into two categories, {\em uninterpreted function symbols} $\mathbf{F_{u}}$ and {\em defined function symbols} $\mathbf{F_{d}}$. Defined function symbols will be denoted with $\widehat{\cdot}$. Schematic variable symbols are variables of the type $\omega \rightarrow \iota$ used to construct sequences of variables, essentially a generalization of the standard concept of a variable. Given a schematic variable $x$ instantiated by a numeral $\alpha$ we get a variable of the $\iota$ sort $x(\alpha)$. 

{\em Formula schemata}, a generalization of formulas including defined predicate symbols, are defined inductively using the standard logical connectives from uninterpreted and defined predicate symbols. Analogously, we label symbols as defined predicate symbols with $\widehat{\cdot}$. A {\em schematic sequent} is a pair of two multisets of formula schemata $\Delta$, $\Pi$ denoted by $\Delta \vdash \Pi$. We will denote multisets of formula schemata by uppercase greek letters unless it causes confusion. 

Note that we extend the \LK-calculus~\cite{GTakeuti1975} (see appendix \ref{appendix}) to the \LKE-calculus~\cite{CDunchev2014} by adding an inference rule for the construction of defined predicate and function symbols and a set of convergent rewrite rules $\mathcal{E}$ (equational theory) to our interpretation. The rules of $\mathcal{E}$ take the following form $\widehat{f}(\bar{t}) \equiv E$, where $\bar{t}$ contains no defined symbols, and either $\widehat{f}$ is a function symbol of range $\iota$ and $E$ is a term or $\widehat{f}$ is a predicate symbol and E is a formula schema.

\begin{definition}[\LKE]
Let $\mathcal{E}$ be an equational theory. \LKE\ is an extension of \LK\ by the
$\mathcal{E}$ inference rule
\begin{small}\begin{prooftree}
\AxiomC{$S(t)$}
\RightLabel{$\mathcal{E}$}
\UnaryInfC{$S(t')$}
\end{prooftree}\end{small}
where the term or formula schema $t$ in the sequent $S$ is replaced by a term or formula schema $t'$ for 
$\mathcal{E}\models t\equiv t'$.
\end{definition}

\begin{example}
Iterated version of  $\lor$ and $\land$ ( the defined predicates are abbreviated 
as $\bigvee$ and $\bigwedge$ ) can be defined using the following equational theory: 
\begin{align*}
\bigvee_{i=0}^0P(i)\equiv  P(0), & &
\bigvee_{i=0}^{s(y)}P(i)\equiv  \bigvee_{i=0}^{y}P(i)\lor P(s(y)), \\
\bigwedge_{i=0}^0P(i)\equiv  P(0) & & \bigwedge_{i=0}^{s(y)}P(i)\equiv \bigwedge_{i=0}^{y}P(i)\land P(s(y)).
\end{align*}

\end{example}

\subsection{ The \LKS-calculus and Proof Schemata}

Schematic proofs are a finite ordered list of {\em proof schema components} that can 
interact with each other. This interaction is defined using so-called 
\emph{links}, a 0-ary inference rule we add to \LKE-calculus: Let 
$S(k,\bar{x})$ be a sequent where $\bar{x}$ is a
vector of schematic variables.  By $S(k,\bar{t})$ we denote $S(k,\bar{x})$ where $\bar{x}$ is replaced by $\bar{t}$, respectively, and $\bar{t}$ is a vector of terms of appropriate type. 
Furthermore, we assume a countably infinite set $\mathcal{B}$ of \emph{proof symbols} denoted by $\varphi ,\psi ,\varphi_i,\psi_j$. The expression 
\begin{prooftree}
\AxiomC{$(\varphi,k,\bar{t})$}
\dashedLine
\UnaryInfC{$S(k,\bar{t})$}
\end{prooftree}
is called a link with the intended meaning that there is a proof called $\varphi$ with the
end-sequent $S(k,\bar{x})$. Let $k$ be a numeric expression, then $\mathcal{V}(k)$ is the set of parameters in $k$.  We refer to a link as an \emph{$E$-link} if $\mathcal{V}(k) \subseteq E$. Note that in this work $E = \left\lbrace n \right\rbrace$ or $E = \emptyset$. 

\begin{definition}[\LKS]
\LKS\ is an extension of \LKE, where links may appear at the leaves of a proof. 
\end{definition}

%We will denote a \textit{sub}\LKS-proof (\textit{sub}\LKE-proof) $\nu$ of an 
%\LKS-proof (\LKE-proof) proof $\varphi$ by $\varphi.\nu$. The set of 
% links in an \LKS-proof $\varphi$ is $\mathit{L}(\varphi)$. We also define 
%substitution of subproofs, that is let $\varphi,\psi$ be \LKS-proofs and 
%$\varphi.\nu$ a \textit{sub}\LKS-proof such that the end sequent of $\psi$ is equal to the end sequent of $\varphi.\nu$, then we can construct a new \LKS-proof $ \varphi\left[\psi \right]_{\nu}$ where 
%$\varphi.\nu$ is replaced by $\psi$.

\begin{definition}[Proof Schema Component]
Let $\psi\in \mathcal{B}$ and $n\in \mathcal{N}$. A \emph{proof schema 
component} $\mathbf{C}$  is a triple $(\psi,\pi ,\nu(k))$  where $\pi$ is an 
\LKS-proof only containing $\emptyset$-links and $\nu(k)$ 
is an \LKS-proof containing $\left\lbrace n\right\rbrace $-links. The end-sequents of the proofs are  $S(0,\bar{x})$ and $S(k,\bar{x})$,
respectively. Given a proof schema 
component $\mathbf{C}=(\psi,\pi ,\nu(k))$ we define $\mathbf{C}.1 = \psi$, 
$\mathbf{C}.2 = \pi$, 
and  $\mathbf{C}.3 = \nu(k)$. 
\end{definition}

If $\nu(k)$ of a proof schema component $(\psi,\pi ,\nu(k))$  contains a link to $\psi$ it will be referred to as {\em cyclic}, otherwise it is {\em acyclic}. 

\begin{definition}[Proof Schema \cite{CDunchev2014}]
Let $\mathbf{C}_1,\cdots, \mathbf{C}_m$ be proof schema components such that 
$\mathbf{C}_i.1$ is distinct for $1\leq i\leq m$ and $n\in \mathcal{N}$. Let the  end sequents of $\mathbf{C}_1$ be $S(0,\bar{x})$ and $S(k,\bar{x})$. We define  $\Psi 
= \left\langle \mathbf{C}_1, \cdots, \mathbf{C}_m \right\rangle $ as a 
{\em proof schema} if 
$\mathbf{C}_i.3$ only contains $\left\lbrace n \right\rbrace $-links to $\mathbf{C}_i.1$ or 
$\mathbf{C}_j.1$ for $1\leq i < j \leq m$. The $\left\lbrace n \right\rbrace $-links are of the 
following form: 
\begin{center}
\begin{minipage}{.45\textwidth}
\begin{prooftree}
\AxiomC{$(\mathbf{C}_i.1, k',\bar{a})$}
\dashedLine
\UnaryInfC{$S'(k',\bar{a})$}
\end{prooftree}
\end{minipage}
\begin{minipage}{.45\textwidth}
\begin{prooftree}
\AxiomC{$(\mathbf{C}_j.1, t,\bar{b})$}
\dashedLine
\UnaryInfC{$S''(t,\bar{b})$}
\end{prooftree}
\end{minipage}
\end{center}
where $t$ is a numeric expression such that $\mathcal{V}(t)\subseteq \left\lbrace n \right\rbrace$, $k'$ is a sub-term of $k$, and $\bar{a}$ and 
$\bar{b}$ are vectors of terms from the appropriate sort. $S'(k',\bar{a})$ 
and $S''(t,\bar{b})$ are the end sequents of components $\mathbf{C}_i$ and 
$\mathbf{C}_j$ respectively.  We call $S(k,\bar{x})$ the end sequent 
of $\Psi$ and assume an identification between the formula occurrences in the end 
sequents of the proof schema components so that we can speak of occurrences in the 
end sequent of $\Psi$. The class of all proof schemata will be denoted by $\fat{\Upsilon}$. 
\end{definition}

For any proof schema $\Phi\in\fat{\Upsilon}$, such that $\Phi= \left\langle 
\mathbf{C}_1, \cdots, \mathbf{C}_m \right\rangle $ we define $|\Phi|= m$ and 
$\Phi.i = \mathbf{C}_i$ for $1\leq i \leq m$. Note that instead of using {\em proof schema pair}~\cite{CDunchev2014,ALeitsch2017} to define proof schemata we use proof schema components. The only difference is that 
proof schema components make the name explicit. All results concerning proof schemata built from proof schema pairs carry over for our above definition. 

%\begin{definition}
%Let $\Phi\in\fat{\Upsilon}$ such that $\Phi$ contains $(\chi ,\pi,\nu(k))$. Let $\nu.\gamma \in PL(\nu(k))$. We call  $\nu.\gamma$ an \emph{incoming link} if $\mathcal{P}(\nu(k).\gamma ) = \chi $, otherwise, we call it an \emph{outgoing link}.
%\end{definition}

\begin{example}
\label{example.1}
Let us consider the proof schema $\Phi = \left\langle (\varphi,\pi,\nu (k)) \right\rangle$. The proof schema uses one defined function symbol $\widehat{S}(\cdot)$ which is used to convert terms of the $\omega$ sort to the $\iota$ sort, that is
\[\mathcal{E} = \left\lbrace \widehat{S}(k+1) = f(\widehat{S}(k)) \ ; \ \widehat{S}(0) = 0 \ ; \ k+f(l)=f(k+l)\right\rbrace. \]
We abbreviate the context as $\Delta = \lbrace P(\alpha+0) ,\forall x.P(x)\to P(f(x)) \rbrace.$
The proofs $\pi$ and $\nu (k)$ are as follows: 
\begin{flushleft}
\begin{minipage}{.1\textwidth}
$\pi =$
\end{minipage}
\begin{minipage}{.85\textwidth}
\begin{tiny}
\begin{prooftree}
\AxiomC{$P(\alpha+0) \vdash P(\alpha +0)$}
\RightLabel{$w\colon l$}
\UnaryInfC{$\Delta \vdash P(\alpha +0)$}
\RightLabel{$\mathcal{E}$}
\UnaryInfC{$\Delta \vdash P(\alpha +\widehat{S}(0))$}
\end{prooftree}
\end{tiny}
\end{minipage}
\end{flushleft}
\begin{flushleft}
\begin{minipage}{.1\textwidth}
$\nu (k) =$
\end{minipage}
\begin{minipage}{.85\textwidth}
\begin{tiny}
\begin{prooftree}
\AxiomC{$(\varphi ,n,\alpha)$}
\dashedLine
\UnaryInfC{$\Delta \vdash P(\alpha + \widehat{S}(n) )$}
\AxiomC{$P(f(\alpha + \widehat{S}(n))) \vdash P(f(\alpha + \widehat{S}(n)))$}
\RightLabel{$\to\colon l$}
\BinaryInfC{$\Delta, P(\alpha + \widehat{S}(n) )\to P(f(\alpha + \widehat{S}(n) )) 
\vdash P(f(\alpha + \widehat{S}(n) ))$}
\RightLabel{$\forall\colon l$}
\UnaryInfC{$\Delta, \forall x.P(x)\to P(f(x)) \vdash P(f(\alpha + \widehat{S}(n) ))$}
\RightLabel{$\mathcal{E}$}
\UnaryInfC{$\Delta, \forall x.P(x)\to P(f(x)) \vdash  P(\alpha + f(\widehat{S}(n)))$}
\RightLabel{$\mathcal{E}$}
\UnaryInfC{$\Delta ,\forall x.P(x)\to P(f(x)) \vdash P(\alpha + \widehat{S}(n+1))$}
\RightLabel{$c\colon l$}
\UnaryInfC{$\Delta\vdash P(\alpha + \widehat{S}(n+1))$}
\end{prooftree}
\end{tiny}
\end{minipage}
\end{flushleft}
Note that $\pi$ contains no links, while $\nu (k)$ contains a single $\{ n\}$-link.
\end{example}

\section{Evaluation and Interpretation}
\label{sec:evalInter}

Proof schemata are an alternative formulation of induction. In~\cite{ALeitsch2017}, it is shown that proof schemata are equivalent to a fragment of the induction arguments formalizable in Peano arithmetic, i.e.\ the so called {\em $k$-simple induction}. More specifically, $k$-simple induction limits the number of inductive eigenvariables\footnote{Inductive eigenvariables are eigenvariables occurring in the context of an induction inference rule.} to one. In previous work~\cite{CDunchev2014,ALeitsch2017}, $\mathbf{LKE}$ was extended by the following induction rule instead of links: 
\begin{prooftree}
\AxiomC{$F(k),\Gamma \vdash \Delta, F(s(k))$}
\RightLabel{$\mathbf{IND}$}
\UnaryInfC{$F(0),\Gamma \vdash \Delta, F(t)$}
\end{prooftree}
where $t$ is an arbitrary term of the numeric sort. The result is the calculus $\mathbf{LKIE}$. To enforce $k$-simplicity we add the following constraint: let $\psi$ be an $\mathbf{LKIE}$-proof such that for any induction inference in $\psi$, $V(t)\subseteq \lbrace k\rbrace$ for some $k$.  In~\cite{CDunchev2014,ALeitsch2017}, the authors show that the following two proposition hold, and thus define the relationship between $k$-simple $\mathbf{LKIE}$-proofs and proof schemata. Given that our calculus can be used to construct proof schemata, the relationship can be trivially extended to proofs resulting from our calculus.
\begin{proposition}
Let $\Psi$ be a proof schema with end-sequent $\mathcal{S}$. Then there exists a $k$-simple $\mathbf{LKIE}$-proof of $\mathcal{S}$
\end{proposition}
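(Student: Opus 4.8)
The plan is to proceed by induction on the number of components $m = |\Psi|$, translating the recursive structure encoded by the links into nested applications of the $\mathbf{IND}$ rule, while replacing each link to a \emph{later} component by an $\mathbf{LKIE}$-proof supplied by the induction hypothesis. The first ingredient is a uniform way to read a schematic end sequent as an induction formula: writing the end sequent of a component as $\Lambda(k,\bar x) \vdash \Theta(k,\bar x)$, I would associate to it the formula schema $F(k) \equiv \bigwedge \Lambda(k,\bar x) \rightarrow \bigvee \Theta(k,\bar x)$, using the iterated connectives definable in the equational theory $\mathcal{E}$. Because the schema has a single free parameter $n$, the only numeric variable over which the recursion abstracts is the one appearing in $k$, and this is exactly what will make the resulting proof $k$-simple: every $\mathbf{IND}$ inference introduced will draw its eigenvariable from $\{k\}$.

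For the base of the outer induction, consider a schema $\langle \mathbf{C}_1\rangle$ with a single (necessarily cyclic) component $\mathbf{C}_1 = (\psi_1, \pi_1, \nu_1(k))$. Here $\pi_1$ is a link-free \LKE-proof of $S(0,\bar x)$ and $\nu_1(k)$ is an \LKS-proof of $S(k,\bar x)$ whose only link is the self-link to $S(k',\bar a)$ with $k'$ a subterm of $k$. I would first normalise $\nu_1(k)$ so that $k = s(k')$, i.e.\ so the recursion descends by one; the general subterm case reduces to this using the defining equations of $\mathcal{E}$. Reading $S(k',\bar a)$ as the hypothesis $F(k')$ and $S(s(k'),\bar x)$ as $F(s(k'))$, the proof $\nu_1(k)$ with its link deleted becomes an \LKE-derivation of the premise $F(k'),\Gamma \vdash \Delta, F(s(k'))$ of an $\mathbf{IND}$ inference, while $\pi_1$ supplies the base sequent carrying $F(0)$. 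A single application of $\mathbf{IND}$ with term $t = k$ then yields an $\mathbf{LKIE}$-proof of $\mathcal{S} = S(k,\bar x)$; the auxiliary logical rules needed to pass between the sequent form $S$ and its encoding $F$, together with the $\mathcal{E}$-rule, are routine.

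For the induction step, let $\Psi = \langle \mathbf{C}_1,\dots,\mathbf{C}_m\rangle$ with $m > 1$. Each link in $\mathbf{C}_1.3 = \nu_1(k)$ is either a self-link to $\mathbf{C}_1$, handled exactly as above by an $\mathbf{IND}$ inference, or a link $(\mathbf{C}_j.1, t,\bar b)$ to a later component with $1 < j \le m$. For the latter, the tail $\langle \mathbf{C}_j,\dots,\mathbf{C}_m\rangle$ is itself a proof schema with fewer than $m$ components, so by the induction hypothesis it has a $k$-simple $\mathbf{LKIE}$-proof of its end sequent $S''(k,\bar x)$. Substituting the parameter by the numeric expression $t$ and the schematic variables by $\bar b$ — a substitution that commutes with the $\mathbf{IND}$ inferences because $\mathcal{V}(t)\subseteq\{n\}$ — I would splice this derivation in place of the corresponding $0$-ary link, thereby eliminating it. Carrying this out for every outward link and then applying the $\mathbf{IND}$ construction to the self-links produces the desired $k$-simple $\mathbf{LKIE}$-proof of $\mathcal{S}$.

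The main obstacle I expect is the encoding step: reconciling the \emph{sequent-level} recursion of a proof schema with the \emph{formula-level} induction of the $\mathbf{IND}$ rule. One must choose $F$ so that (i) the defined symbols and the equational theory $\mathcal{E}$ are respected when rewriting between $S$ and $F$, (ii) the part of the context that does not participate in the recursion is correctly threaded through as the side context $\Gamma,\Delta$ of $\mathbf{IND}$, and (iii) the single-eigenvariable restriction is never violated. Getting this bookkeeping right, and in particular checking that the subterm-to-predecessor normalisation of $k'$ does not silently introduce a second inductive parameter, is where the real care is required; the remaining manipulations are standard \LKE-derivations.
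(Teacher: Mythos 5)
Your overall strategy---downward induction on the list of components, splicing in $\mathbf{LKIE}$-proofs (obtained from the induction hypothesis) for links to later components, and turning self-links into $\mathbf{IND}$ inferences---is the right one; it is essentially the strategy of the proof the paper defers to (the paper gives no argument of its own, it only cites Proposition 3.13 of Leitsch et al.). However, two steps, as you state them, would fail.

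First, your induction formula $F(k) \equiv \bigwedge \Lambda(k,\bar x) \rightarrow \bigvee \Theta(k,\bar x)$ leaves the schematic variables $\bar x$ free. A self-link has the form $(\mathbf{C}_1.1, k', \bar a)$ where $\bar a$ is an \emph{arbitrary} vector of terms, so after deleting the link you must re-derive the leaf $S(k',\bar a)$ from the $\mathbf{IND}$ hypothesis $F(k')$; with $\bar x$ free, that hypothesis only gives you the identity instance $S(k',\bar x)$, not $S(k',\bar a)$. You need $F(k)$ to be a closure over $\bar x$ (universally quantified in a way that permits instantiation to $\bar a$), and you must check this closure is expressible given that the $\bar x$ are schematic variables of type $\omega \rightarrow \iota$, not ordinary first-order variables. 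The same issue recurs in your inductive step, where outward links $(\mathbf{C}_j.1, t, \bar b)$ also carry term instantiations $\bar b$. Second, the reduction of ``$k'$ a subterm of $k$'' to ``$k = s(k')$'' cannot be achieved ``using the defining equations of $\mathcal{E}$'': the equational theory rewrites terms and formula schemata inside sequents, it does not restructure the recursion of the schema. If the step case descends by more than one, or if several self-links descend by different amounts, you must strengthen the induction formula (e.g.\ to $F(k) \wedge F(s(k))$, or to a course-of-values version) and re-prove the step for the strengthened formula; this is a genuine proof-theoretic construction, not rewriting. A smaller slip: a one-component schema need not be cyclic, since $\nu_1(k)$ may contain no links at all, though that case is trivially an $\mathbf{LKE}$-proof. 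With these repairs your argument goes through and matches the cited proof.
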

\begin{proof}
See Proposition 3.13 of~\cite{ALeitsch2017}.
\end{proof}

\begin{proposition}
Let $\pi$ be a $k$-simple $\mathbf{LKIE}$-proof of $\mathcal{S}$. Then there exists a proof schema with end-sequent $\mathcal{S}$.
\end{proposition}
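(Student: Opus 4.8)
The plan is to proceed by induction on the number of $\mathbf{IND}$ inferences occurring in $\pi$, translating each such inference into a single proof schema component equipped with a cyclic link that encodes the stepwise unrolling of the induction. The inductive eigenvariable isolated by $k$-simplicity will play the role of the free parameter $n$ of the resulting schema, so that the single-parameter restriction built into the definition of proof schemata is met precisely because $\pi$ is $k$-simple.

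For the base case of this induction, $\pi$ contains no $\mathbf{IND}$ inference and is therefore an ordinary \LKE-proof; it is then (degenerately) a proof schema consisting of one acyclic component whose subproofs contain no links, so the claim is immediate. For the inductive step, I would select a topmost $\mathbf{IND}$ inference, say with step subproof $\delta$ deriving $F(k),\Gamma \vdash \Delta, F(s(k))$ and conclusion $F(0),\Gamma \vdash \Delta, F(t)$. I introduce a fresh proof symbol $\psi$ and a component with end sequent $S(k',\bar{x}) = F(0),\Gamma \vdash \Delta, F(k')$. Its base proof at $k'=0$ is an axiom $F(0)\vdash F(0)$ followed by weakenings to $F(0),\Gamma \vdash \Delta, F(0)$, and its step proof $\nu$ is obtained by cutting the link against (a copy of $\delta$ with $k$ instantiated to $n$) and contracting, exactly as in Example~\ref{example.1}:
\begin{prooftree}
\AxiomC{$(\psi,\, n,\, \bar{x})$}
\dashedLine
\UnaryInfC{$F(0), \Gamma \vdash \Delta, F(n)$}
\AxiomC{$F(n), \Gamma \vdash \Delta, F(s(n))$}
\RightLabel{\cut}
\BinaryInfC{$F(0), \Gamma, \Gamma \vdash \Delta, \Delta, F(s(n))$}
\RightLabel{$c^{*}$}
\UnaryInfC{$F(0), \Gamma \vdash \Delta, F(s(n))$}
\end{prooftree}
The original $\mathbf{IND}$ inference in $\pi$ is then replaced by a link $(\psi, t, \bar{x})$, which derives precisely $F(0),\Gamma \vdash \Delta, F(t)$; applying the induction hypothesis to the inductions remaining in $\delta$ and in the rest of $\pi$ yields the remaining components, which are appended to the list.

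The main obstacle is organizational rather than local: assembling the generated components into a single ordered list $\langle \mathbf{C}_1, \dots, \mathbf{C}_m \rangle$ satisfying the proof schema ordering condition, namely that $\mathbf{C}_i.3$ links only to $\mathbf{C}_i.1$ or to some $\mathbf{C}_j.1$ with $i < j$. I would recover this ordering from the nesting structure of the $\mathbf{IND}$ inferences: an inner induction always sits strictly above an outer one, so ordering the components by decreasing depth of the replaced inference makes every link either self-referential (the cyclic step link above) or directed at a component introduced later in the list. A second delicate point, which is exactly where $k$-simplicity is indispensable, is that all substituted index terms appearing in the links satisfy $\mathcal{V}(t) \subseteq \{n\}$; without the restriction to a single inductive eigenvariable the links could not be expressed over a single parameter.

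The remaining checks are routine and essentially dual to the construction underlying the preceding proposition: preservation of the eigenvariable conditions for quantifier inferences occurring inside the parameterized contexts $\Gamma, \Delta$, soundness of the introduced \cut\ and contraction inferences, and the correct behaviour of the equational theory $\mathcal{E}$ when $t$ is a compound numeral. I would discharge these by appealing to the translation in the opposite direction already established, and for the full details cite Proposition~3.13 of~\cite{ALeitsch2017}, from which the present statement follows by the observation that our calculus constructs exactly the proof schemata used there.
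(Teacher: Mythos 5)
Your construction is, in substance, the proof that the paper itself delegates to the literature: the paper's own justification is nothing more than the citation of Proposition~3.15 of~\cite{ALeitsch2017}, and what you write out --- replacing each $\mathbf{IND}$ inference by a link to a fresh component whose step proof cuts the link $(\psi,n,\bar{x})$ against the step subproof instantiated at $n$ and then contracts, with an axiom-plus-weakening base proof --- is exactly the unrolling argument behind that proposition, following the same pattern as Example~\ref{example.1}. So the approach is the right one and the local construction is sound.

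Two points need repair. First, the ordering. Under the usual convention that depth is measured from the end sequent, your rule ``order by decreasing depth'' produces exactly the reverse of what the definition of proof schema demands: the component replacing an \emph{outer} induction contains, inside its step proof, links to the components of the inductions nested \emph{above} it, and since $\mathbf{C}_i.3$ may only link to $\mathbf{C}_i.1$ or to $\mathbf{C}_j.1$ with $i<j$, outer (shallower) inductions must receive \emph{smaller} indices. The list must therefore be ordered by \emph{increasing} depth, with the component built from the whole proof of $\mathcal{S}$ as $\mathbf{C}_1$ (which is also forced, since the end sequent of a proof schema is the end sequent of its first component). Your stated criterion --- every link points at a component introduced later --- is the correct one; your realization of it is backwards, though trivially fixed by reversing the list. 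Second, your closing appeal to Proposition~3.13 of~\cite{ALeitsch2017} is misplaced: 3.13 is the \emph{converse} translation (proof schemata into $k$-simple $\mathbf{LKIE}$-proofs, i.e.\ the preceding proposition of this paper), and the present statement does not ``follow from'' the opposite direction; the reference you want is 3.15, and in any case your construction already does that work, so the remark should be dropped rather than leaned on. A smaller point of rigor: once a topmost $\mathbf{IND}$ is replaced by a link, the object you recurse on is no longer an $\mathbf{LKIE}$-proof but an \LKS-derivation still containing $\mathbf{IND}$ inferences, so the induction hypothesis must be stated for that hybrid class (equivalently, process the inductions topmost-first while carrying the already-built components along); as written, the induction on the number of $\mathbf{IND}$ inferences does not literally apply to the proofs produced by your own replacement step.
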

\begin{proof}
See Proposition 3.15 of~\cite{ALeitsch2017}.
\end{proof}

Unlike the induction proofs of the $\mathbf{LKIE}$-calculus, proof schemata have a recursive structure and thus require an evaluation (``unrolling''), similar to primitive recursion functions. When we instantiate the free parameter, the following evaluation procedure suffices. 
\begin{definition}[Evaluation of proof schema~\cite{CDunchev2014}]
\label{def:eval}
We define the rewrite rules for links 
\begin{flushleft}
\begin{minipage}{.45\textwidth}
\begin{small}\begin{prooftree}
\AxiomC{$(\varphi ,0,\bar{t})$}
\dashedLine
\RightLabel{$\Rightarrow\pi $}
\UnaryInfC{$S(0,\bar{t})$}
\end{prooftree}\end{small}
\end{minipage}
\begin{minipage}{.45\textwidth}
\begin{small}\begin{prooftree}
\AxiomC{$(\varphi ,k,\bar{t})$}
\dashedLine
\RightLabel{$\Rightarrow\nu (k)$}
\UnaryInfC{$S(k,\bar{t})$}
\end{prooftree}\end{small}
\end{minipage}
\end{flushleft}
for all proof schema components $\mathbf{C}=(\varphi ,\pi ,\nu (k))$. Furthermore, for $\alpha\in \mathbb{N}$, we define $\mathbf{C}\left[ k \setminus \alpha \right] \downarrow$ as a normal form of the link
\begin{small}\begin{prooftree}
\AxiomC{$(\varphi ,\alpha,\bar{t})$}
\dashedLine
\UnaryInfC{$S(\alpha,\bar{t})$}
\end{prooftree}\end{small}
 under the above rewrite system extended by the rewrite rules for defined function 
and predicate symbols, i.e. the equational theory $\mathcal{E}$. Also, for a proof schema $\Phi =\langle\mathbf{C}_1,\ldots 
,\mathbf{C}_m\rangle$, we define $\Phi\left[ n \setminus \alpha \right] \downarrow= \mathbf{C}_{1}\left[ k \setminus \alpha \right] \downarrow$.
\end{definition}

\begin{example}
Let $\Phi$ be the proof schema of example \ref{example.1} and $\Delta$ defined equivalently. For $1\in\mathbb{N}$ we can write down $\Phi\left[ n\setminus 1 \right] \downarrow$ as follows:
\begin{center}
\begin{tiny}
\begin{prooftree}
\AxiomC{$P(f(\alpha + \widehat{S}(0))) \vdash P(f(\alpha + \widehat{S}(0)))$}
\AxiomC{$P(\alpha + 0)\vdash P(\alpha + 0)$}
\RightLabel{$w\colon l$}
\UnaryInfC{$\Delta \vdash P(\alpha + 0)$}
\RightLabel{$\mathcal{E}$}
\UnaryInfC{$\Delta \vdash P(\alpha + \widehat{S}(0))$}
\RightLabel{$\to\colon l$}
\BinaryInfC{$\Delta ,P(\alpha + \widehat{S}(0) )\to P(f(\alpha + \widehat{S}(0) )) 
\vdash P(f(\alpha + \widehat{S}(0)))$}
\RightLabel{$\forall\colon l$}
\UnaryInfC{$\Delta ,\forall x.P(x)\to P(f(x)) \vdash P(f(\alpha + \widehat{S}(0) ))$}
\RightLabel{$c\colon l$}
\UnaryInfC{$\Delta \vdash  P(f(\alpha + \widehat{S}(0) ))$}
\RightLabel{$\mathcal{E}$}
\UnaryInfC{$\Delta \vdash P(\alpha + f(\widehat{S}(0)))$}
\RightLabel{$\mathcal{E}$}
\UnaryInfC{$\Delta \vdash P(\alpha + \widehat{S}(0+1))$}
\RightLabel{$\mathcal{E}$}
\UnaryInfC{$\Delta \vdash P(\alpha + \widehat{S}(1))$}
\end{prooftree}
\end{tiny}
\end{center}
\end{example}

The described evaluation procedure essentially defines a rewrite system for proof schemata with the following property.

\begin{lemma}
The rewrite system for links is strongly normalizing, and for a proof schema $\Phi$ and $\alpha\in \mathcal{N}$, $\Phi\left[ n \setminus \alpha \right] \downarrow$ is an $\mathbf{LK}$-proof. 
\end{lemma}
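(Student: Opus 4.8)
The plan is to prove two claims: strong normalization of the link-rewrite system, and the fact that the normal form is an honest $\mathbf{LK}$-proof. I would treat these separately since they rest on different ideas.

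For strong normalization, the key observation is that the rewrite rules for links replace a link $(\varphi,\alpha,\bar t)$ either by $\pi$ (when $\alpha=0$) or by $\nu(\alpha)$ (when $\alpha$ is a successor). First I would fix a proof schema $\Phi=\langle \mathbf C_1,\ldots,\mathbf C_m\rangle$ and a concrete numeral $\alpha$, and set up a termination measure on the multiset of links occurring in a partially rewritten object. The crucial structural facts, guaranteed by the definition of proof schema, are: (i) $\pi$ (the base component $\mathbf C_i.2$) contains no links at all, so rewriting a $0$-valued link strictly removes a link without introducing any; and (ii) in $\nu(k)=\mathbf C_i.3$, every $\{n\}$-link either points to a later component $\mathbf C_j$ with $i<j$, or points back to $\mathbf C_i$ itself but with a numeric argument $k'$ that is a proper subterm of $k$. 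I would therefore order links lexicographically, first by the value of the numeral argument and second by the component index in the reverse ordering $m,m-1,\ldots,1$. Unrolling a successor link to $\nu(\alpha)$ replaces a link of value $\alpha$ either by links of strictly smaller numeral value (the self-links, since $k'$ is a proper subterm hence evaluates to a strictly smaller numeral) or by links of the same value but strictly larger component index $j>i$; unrolling a zero link removes it outright. Hence the multiset of link-measures strictly decreases in the well-founded lexicographic order $(\mathbb N,<)\times(\{1,\ldots,m\},>)$ under the multiset extension, giving strong normalization. The interleaved $\mathcal E$-rewriting terminates because $\mathcal E$ is assumed convergent.

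For the second claim, that $\Phi[n\setminus\alpha]\downarrow$ is an $\mathbf{LK}$-proof, I would argue that the normal form contains no links and no residual $\mathcal E$-inferences or defined symbols, and that every remaining inference is a genuine $\mathbf{LK}$ rule. No links survive because the normal form is by definition irreducible, and any link $(\varphi,\beta,\bar t)$ with $\beta$ a concrete numeral is a redex of the link-rewrite system (either $\beta=0$ or $\beta$ is a successor), so an irreducible object is link-free; here I would note that instantiating the free parameter $n$ by $\alpha$ forces every link argument to normalize to a ground numeral, so no link can be stuck. Since $\pi$ and $\nu(k)$ are $\mathbf{LKS}$-proofs built from $\mathbf{LKE}$ rules plus links, removing the links leaves only $\mathbf{LKE}$ inferences, and exhaustively applying the convergent rules of $\mathcal E$ eliminates the remaining $\mathcal E$-inferences together with all defined function and predicate symbols, so what remains is a pure $\mathbf{LK}$ derivation.

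The main obstacle I expect is making the termination measure genuinely well-founded rather than merely plausible, because the two recursion mechanisms interact: the self-referential links decrease the numeral but the cross-component links do not, and a naive measure on numeral value alone would fail. The lexicographic product with the reversed component index is exactly what is needed, but justifying that unrolling $\nu(\alpha)$ cannot reintroduce a link of the same value pointing to an earlier or equal component requires careful appeal to the proof-schema constraint that $\mathbf C_i.3$ only links to $\mathbf C_i.1$ (with a proper subterm argument) or to $\mathbf C_j.1$ with $j>i$. A secondary subtlety is ensuring the interleaving of link-rewriting and $\mathcal E$-rewriting is handled cleanly; I would keep them conceptually separate, first normalizing links to obtain a link-free $\mathbf{LKE}$-proof and then invoking convergence of $\mathcal E$, so that the two terminating systems are not entangled in the induction.
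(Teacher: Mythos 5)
Your overall decomposition (a well-founded measure on links for termination, then a separate argument that the normal form is link-free and $\mathcal{E}$-normalized) is sensible --- the paper itself offers no argument, deferring entirely to Lemma 3.10 of the cited work --- but your termination measure is ordered the wrong way around, and the invariant you invoke to justify it is false. You order links lexicographically with the numeral value as the \emph{primary} key and the reversed component index as the secondary key, and you claim that unrolling a step-case link of value $\alpha$ at component $\mathbf{C}_i$ produces cross-component links ``of the same value'' with index $j>i$. Nothing in the definition of proof schema supports this: a link from $\mathbf{C}_i.3$ to $\mathbf{C}_j.1$ carries an arbitrary numeric expression $t$ with $\mathcal{V}(t)\subseteq\{n\}$, so after instantiating the parameter the new link can have numeral value strictly \emph{larger} than $\alpha$ (take $t=s(s(n))$, say). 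In that case the replacement link is larger than the removed one in your ordering, the multiset measure does not decrease, and the proof of strong normalization breaks exactly on the interaction you yourself flagged as the main obstacle. The repair is to swap the priorities: order by reversed component index first and numeral value second. Then a cross-component link is strictly smaller no matter what numeral it carries (its primary key decreases), while a self-link keeps the primary key and strictly decreases the secondary one (using that $k'$ is a proper subterm); the multiset extension of this well-founded lexicographic order then goes through.

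A secondary inaccuracy: you assert that the base-case proof $\pi$ ``contains no links at all.'' By the definition of proof schema component, $\pi$ is an \LKS-proof containing $\emptyset$-links, i.e., links whose numeric argument is parameter-free; only the example's $\pi$ happens to be link-free. So rewriting a $0$-valued link \emph{can} introduce new links, and these too must be covered by the measure --- harmless under the corrected, component-first order, provided such links point to later components, a constraint the paper leaves implicit. Your second claim, that the normal form is a genuine \LK-proof, is right in outline (every link with a ground numeral argument is a redex, so none survive; convergence of $\mathcal{E}$ handles the defined symbols), though you should note that $\mathcal{E}$-inferences whose premise and conclusion coincide after normalization must actually be deleted from the derivation tree, not merely ignored, for the result to be literally an \LK-proof.
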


\begin{proof}
See Lemma 3.10 of~\cite{ALeitsch2017}. 
\end{proof}

\begin{proposition}[Soundness \cite{CDunchev2014}]
\label{prop:sound}
Let $\Phi=$ $\langle\mathbf{C}_1,\ldots ,\mathbf{C}_m\rangle$ be a proof schema with end-sequent $S(n,\bar{x})$ and let $\alpha\in \mathcal{N}$. Then $\Phi\left[ n \setminus \alpha \right] \downarrow$ is an \LK-proof of $S(n,\bar{x})$.
\end{proposition}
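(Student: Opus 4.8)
The plan is to prove Proposition~\ref{prop:sound} by induction on $\alpha$, using the evaluation rewrite system of Definition~\ref{def:eval} together with the strong-normalization lemma just established. First I would observe that by the preceding Lemma, $\Phi\left[ n \setminus \alpha \right] \downarrow$ is guaranteed to be a well-formed $\mathbf{LK}$-proof (all links have been rewritten away and all defined symbols evaluated via $\mathcal{E}$); what remains to be shown is that its end-sequent is the correct instance $S(\alpha,\bar{x})$ and that every inference is a sound $\mathbf{LK}$-inference. Since $\mathcal{E}$ is a convergent equational theory and the $\mathcal{E}$-rule only replaces $t$ by a provably equal $t'$, soundness of those steps is immediate; the real content is that the link-rewriting preserves the claimed end-sequent.

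The base case is $\alpha = 0$: here the link $(\varphi,0,\bar{t})$ rewrites to $\pi = \mathbf{C}_1.2$, which by the definition of a proof schema component is an $\mathbf{LKS}$-proof containing only $\emptyset$-links with end-sequent $S(0,\bar{x})$. After evaluating the remaining defined symbols through $\mathcal{E}$, this is an ordinary $\mathbf{LK}$-proof of $S(0,\bar{x})$, as required. For the induction step I would assume the statement holds for all numerals smaller than $\alpha$ and consider $\alpha = \beta + 1$. The link $(\varphi,\beta+1,\bar{t})$ rewrites to $\nu(k)\left[ k \setminus \beta+1 \right]$, which is an $\mathbf{LKS}$-proof with end-sequent $S(\beta+1,\bar{x})$ whose leaves may contain $\{n\}$-links. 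By the proof-schema conditions, each such link is either a link to $\mathbf{C}_1$ with a strictly smaller numeric argument (a subterm of $k$, hence evaluating to some $\gamma \leq \beta$) or a link to a later component $\mathbf{C}_j$. The induction hypothesis — suitably strengthened to range over all components $\mathbf{C}_i$ and all numerals below $\alpha$, and relying on the acyclicity/ordering condition $i < j$ to handle links to later components by a secondary induction on the component index — yields that each of these links normalizes to a genuine $\mathbf{LK}$-proof of the corresponding instantiated sequent. Splicing these $\mathbf{LK}$-subproofs into the leaves of the evaluated $\nu(k)$ and discharging the $\mathcal{E}$-steps produces an $\mathbf{LK}$-proof of $S(\beta+1,\bar{x})$.

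The main obstacle I anticipate is formulating the induction hypothesis strongly enough to cover links that point to other components $\mathbf{C}_j$ with $j > 1$, rather than only self-links to $\mathbf{C}_1$. The single-parameter statement of the proposition is about $\mathbf{C}_1$, but its proof must simultaneously control the normalization of every component reachable from $\mathbf{C}_1$. The clean way to handle this is a well-founded induction on the lexicographic pair $(\alpha, \, m - i)$, where $\alpha$ is the instantiating numeral and $i$ is the component index: a self-link decreases $\alpha$, while a link to a later component $\mathbf{C}_j$ with $j > i$ keeps $\alpha$ fixed but decreases $m - i$, since by the ordering condition links only go forward and the final component can have no outgoing links. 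This well-foundedness is exactly what the strong-normalization lemma packages for the rewrite system, so I would lean on that lemma to guarantee termination of the unrolling and use the structural induction only to track that the end-sequent and inference validity are preserved at each rewrite step.
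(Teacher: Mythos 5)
Your attempt cannot be matched against a proof in the paper, because the paper gives none: the proposition is imported from \cite{CDunchev2014}, with the preceding lemma (cited as Lemma 3.10 of \cite{ALeitsch2017}) supplying strong normalization of the link-rewrite system. Judged on its own merits, your strategy --- unroll the links and check by a well-founded induction that each rewrite step preserves the end-sequent and \LK-validity --- is the right one, but your well-founded measure is set up backwards, and this is a genuine gap. You induct on the lexicographic pair $(\alpha,\, m-i)$ with the numeral $\alpha$ primary, justified by the claim that ``a link to a later component $\mathbf{C}_j$ with $j > i$ keeps $\alpha$ fixed.'' The definition of proof schema guarantees no such thing: a link from $\mathbf{C}_i.3$ to $\mathbf{C}_j.1$ carries an arbitrary numeric expression $t$ with $\mathcal{V}(t)\subseteq\left\lbrace n\right\rbrace$ --- for instance $t = s(n)$ or $n+1$, as in the paper's own Example~\ref{example.1} --- so after instantiation at $\alpha$ its argument normalizes to a numeral that may be strictly larger than $\alpha$. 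Such a step strictly increases the primary coordinate of your measure, so the induction does not terminate along your ordering, and your strengthened hypothesis (``all components and all numerals below $\alpha$'') fails to cover exactly these forward links. The same oversight infects your base case: $\mathbf{C}_1.2$ is only required to contain $\emptyset$-links, i.e.\ links whose numeric argument contains no parameter; it may still contain links to later components, so it is not yet an \LK-proof after $\mathcal{E}$-normalization alone.

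The repair is to flip the lexicographic order: induct primarily on the component index, descending from $\mathbf{C}_m$ (whose step case, by the ordering condition $i < j$ in the definition of proof schema, can contain only self-links), and secondarily on the instantiating numeral. The hypothesis carried for later components must then read ``normalizes to an \LK-proof of the corresponding instantiated end-sequent for \emph{every} numeral,'' not merely for numerals below $\alpha$. With that measure a forward link is discharged by the outer hypothesis no matter how large its instantiated argument is, a self-link decreases the inner numeral, and the remainder of your argument --- splicing the normalized subproofs into the leaves, eliminating the $\mathcal{E}$-steps, and checking that the end-sequent is $S(n,\bar{x})\left[ n\setminus\alpha\right]\downarrow$ --- goes through unchanged.
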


Essentially, Proposition~\ref{prop:sound} states that $\mathbf{C}_1\left[ k \setminus \alpha \right] \downarrow$ is an \LK-proof of the end-sequent $S(n,\bar{x})\left[ k \setminus \alpha \right]\downarrow$ where by $\downarrow$ we refer to normalization of the defined symbols in $S(n,\bar{x})$. 

\section{The $\mathcal{S}\mathit{i}\mathbf{LK}$-calculus}
\label{sec:calculus}

The $\mathcal{S}\mathit{i}\mathbf{LK}$-calculus ($\mathcal{S}$chematic $\mathit{i}$nduction $\mathbf{LK}$-calculus, see Table \ref{tab:Calculus} \& \ref{tab:Calculus2}) allows one to build a proof schema component-wise. We call the set of expressions in between two $|$ a {\em component group}. Note that, unlike proof schemata we do not need proof symbols nor ordering because it is implied by the construction. Each component group consists of a multiset of {\em component pairs} which are pairs of $\mathbf{LKS}$-sequents. A set of component groups is referred to as a {\em component collection}.  Even though all auxiliary components (or component groups) are shifted to the left, we do not intend any ordering, i.e.\ writing, for instance,  $\SchCom{\top}{A\vdash A}$ to the right of $\fat{\Pi}$ in $Ax_1:r$ does not change the rule. 

\begin{table}
\caption{The basic inference rules of the $\mathcal{S}\mathit{i}\mathbf{LK}$-calculus. }
\label{tab:Calculus}
\begin{tabular}{|c|}
\hline

\begin{minipage}{.45\textwidth} 
\begin{tiny}

\begin{prooftree}
\AxiomC{$ 
\fat{\Pi} 
$}
\RightLabel{$Ax_1:r$}
\UnaryInfC{$ 
\SchCom{\top}{A\vdash A\ } 
\ComSep{}{} 
\fat{\Pi} 
$}
\end{prooftree}
\end{tiny}

\end{minipage}
\begin{minipage}{.45\textwidth} 
\begin{tiny}

\begin{prooftree}
\AxiomC{$ 
\mathbb{\Gamma} 
\ComSep{}{} 
\fat{\Pi} 
$}
\RightLabel{$Ax_2:r$}
\UnaryInfC{$ 
\SchCom{\top}{A\vdash A\ }, 
\mathbb{\Gamma} 
\ComSep{}{} 
\fat{\Pi} 
$}
\end{prooftree}
\end{tiny}

\end{minipage} \\

\begin{minipage}{.45\textwidth} 
\begin{tiny}

\begin{prooftree}
\AxiomC{$ 
\SchCom{\top}{\fat{[}\ \fat{\pi}\ \fat{]}}, 
\mathbb{\Gamma} 
\ComSep{}{} 
\fat{\Pi} 
$}
\RightLabel{$Ax:l$}
\UnaryInfC{$ 
\SchCom{A\vdash^{f(n)} A}{\fat{[}\ \fat{\pi}\ \fat{]}}, 
\mathbb{\Gamma} 
\ComSep{}{} 
\fat{\Pi} 
$}
\end{prooftree}
\end{tiny}

\end{minipage}
\begin{minipage}{.45\textwidth} 
\begin{tiny}

\begin{prooftree}
\AxiomC{$ 
\SchCom{\top}{\fat{\pi}}, 
\SchCom{\top}{\fat{\pi}}, 
\mathbb{\Gamma} 
\ComSep{}{} 
\fat{\Pi} 
$}
\RightLabel{$c_{c}:r$}
\UnaryInfC{$ 
\SchCom{\top}{\fat{\pi}}, 
\mathbb{\Gamma} 
\ComSep{}{} 
\fat{\Pi} 
$}
\end{prooftree}
\end{tiny}

\end{minipage} \\

\begin{minipage}{.5\textwidth} 
\begin{tiny}

\begin{prooftree}
\AxiomC{$ 
\SchCom{\fat{\nu}}{\fat{[}\ \fat{\pi}\ \fat{]}}, 
\SchCom{\fat{\nu}}{\fat{[}\ \fat{\pi}\ \fat{]}}, 
\mathbb{\Gamma} 
\ComSep{}{} 
\fat{\Pi} 
$}
\RightLabel{$c_{c}:l$}
\UnaryInfC{$ 
\SchCom{\fat{\nu}}{\fat{[}\ \fat{\pi}\ \fat{]}}, 
\mathbb{\Gamma} 
\ComSep{}{} 
\fat{\Pi} 
$}
\end{prooftree}
\end{tiny}

\end{minipage}
\begin{minipage}{.4\textwidth} 
\begin{tiny}

\begin{prooftree}
\AxiomC{$ 
\SchCom{\fat{\nu}}{\fat{[}\fat{\pi} \fat{]}}, 
\mathbb{\Gamma} 
\ComSep{}{} 
\fat{\Pi} 
$}
\RightLabel{$br$}
\UnaryInfC{$ 
\SchCom{\top}{\fat{[}\fat{\pi} \fat{]}}, 
\mathbb{\Gamma'} 
\ComSep{}{} 
\fat{\Pi} 
$}
\end{prooftree}
\end{tiny}

\end{minipage} \\

\begin{minipage}{.45\textwidth} 
\begin{tiny}

\begin{prooftree}
\AxiomC{$ 
\SchCom{\top}{ \fat{\nu} }, 
\mathbb{\Gamma} 
\ComSep{}{} 
\fat{\Pi} 
$}
\RightLabel{$cl_{bc}$}
\UnaryInfC{$ 
\SchCom{\top}{\fat{[}\ \fat{\nu} \ \fat{]}}, 
\mathbb{\Gamma} 
\ComSep{}{} 
\fat{\Pi} 
$}
\end{prooftree}
\end{tiny}

\end{minipage}
\begin{minipage}{.45\textwidth} 
\begin{tiny}

\begin{prooftree}
\AxiomC{$ 
\SchCom{\top}{\fat{[}\ \fat{\nu} \ \fat{]}}
\ComSep{}{} 
\fat{\Pi} 
$}
\RightLabel{$cl_{LKE}$}
\UnaryInfC{$ 
\SchCom{\fat{[}\ \fat{]}}{\fat{[}\ \fat{\nu} \ \fat{]}}
\ComSep{}{} 
\fat{\Pi} 
$}
\end{prooftree}

\end{tiny}

\end{minipage} \\

\begin{minipage}{.9\textwidth} 
\begin{tiny}

\begin{prooftree}
\AxiomC{$ 
\SchCom{(\Pi\vdash^{\alpha} \Delta)\left[ n \setminus \alpha \right]}{\fat{[}\ (\Pi\vdash \Delta)\left[ n \setminus 0 \right] \ \fat{]}} 
\ComSep{}{} 
\fat{\Pi} 
$}
\RightLabel{$cl_{sc}$}
\UnaryInfC{$ 
\SchCom{\fat{[}\ (\Pi\vdash \Delta)\left[ n \setminus \alpha \right] \ \fat{]}}{\fat{[}\ (\Pi\vdash \Delta)\left[ n \setminus 0 \right] \ \fat{]}} 
\ComSep{}{} 
\fat{\Pi} 
$}
\end{prooftree}
\end{tiny}

\end{minipage} \\

\begin{minipage}{.5\textwidth} 
\begin{tiny}

\begin{prooftree}
\AxiomC{$ 
\SchCom{\fat{\nu}}{\fat{[}\ \fat{\pi} \ \fat{]}},\SchCom{\fat{\mu}}{\fat{[}\ \fat{\pi} \ \fat{]}}, 
\mathbb{\Gamma}
\ComSep{}{} 
\fat{\Pi} 
$}
\RightLabel{$\rho_2^{sc}$}
\UnaryInfC{$ 
\SchCom{\fat{\nu}'}{\fat{[}\ \fat{\pi} \ \fat{]}}, 
\mathbb{\Gamma} 
\ComSep{}{} 
\fat{\Pi} 
$}
\end{prooftree}
\end{tiny}

\end{minipage}
\begin{minipage}{.4\textwidth} 
\begin{tiny}

\begin{prooftree}
\AxiomC{$ 
\SchCom{\fat{\nu}}{\fat{[}\ \fat{\pi} \ \fat{]}}, 
\mathbb{\Gamma} 
\ComSep{}{} 
\fat{\Pi} 
$}
\RightLabel{$\rho_1^{sc}$}
\UnaryInfC{$ 
\SchCom{\fat{\nu}'}{\fat{[}\ \fat{\pi} \ \fat{]}}, 
\mathbb{\Gamma} 
\ComSep{}{} 
\fat{\Pi} 
$}
\end{prooftree}
\end{tiny}

\end{minipage} \\

\begin{minipage}{.5\textwidth} 
\begin{tiny}

\begin{prooftree}
\AxiomC{$ 
\SchCom{\top}{\fat{\pi}}, 
\SchCom{\top}{\fat{\eta}}, 
\mathbb{\Gamma} 
\ComSep{}{} 
\fat{\Pi} 
$}
\RightLabel{$\rho_2^{bc}$}
\UnaryInfC{$ 
\SchCom{\top}{\fat{\pi}'}, 
\mathbb{\Gamma} 
\ComSep{}{} 
\fat{\Pi} 
$}
\end{prooftree}
\end{tiny}

\end{minipage}
\begin{minipage}{.4\textwidth} 
\begin{tiny}

\begin{prooftree}
\AxiomC{$ 
\SchCom{\top}{\fat{\pi}}, 
\mathbb{\Gamma} 
\ComSep{}{} 
\fat{\Pi} 
$}
\RightLabel{$\rho_1^{bc}$}
\UnaryInfC{$ 
\SchCom{\top}{\fat{\pi}'}, 
\mathbb{\Gamma} 
\ComSep{}{} 
\fat{\Pi} 
$}
\end{prooftree}
\end{tiny}

\end{minipage} \\

\hline
\end{tabular}
\end{table}

To enforce correct construction of proof schema components we introduce a {\em closure} mechanism similar to {\em focusing}~\cite{JAndreoli1992}. Let us consider a component pair $\mathbf{C}=\SchCom{\fat{\nu}}{\fat{\pi}}$ where $\fat{\nu}$ is a sequent, a sequent in square brackets, or $\top$ and $\fat{\pi}$ is a sequent or a sequent in square brackets. The left side $\fat{\nu}$ is the stepcase and the right side $\fat{\pi}$ is the basecase. The configuration $\fat{\nu} =\top$ means that we are still allowed to apply rules to the basecase. If $\fat{\pi}$ is closed, i.e.\ $\fat{\pi}$ is of the form $\left[\ \Delta\vdash\Pi \ \right]$ for an arbitrary sequent $\Delta\vdash\Pi$, we have closed the basecase (using inference rule $cl_{bc}$) and essentially fixed its end-sequent. Therefore, $\fat{\nu}$ is always equal to $\top$ as long as the basecase is not closed. If $\fat{\pi}$ is closed we are allowed to apply rules to the stepcase. This fixing of the end-sequent essentially fixes the sequent we are allowed to introduce using the inference rule $\circlearrowright$. 

Apart from schematic proofs, simple \LKE-proofs can be constructed by keeping the stepcase equal to $\top$. If we instead intend to proof a scheme we have to work on the stepcase. If we consider a non-redundant basecase\footnote{In general, it is allowed to construct any number of components that are not necessary for the proof construction. Here, we want to consider a basecase of a relevant component.}, its sequent $(\Pi\vdash\Delta )[n\setminus 0]$ characterizes already the sequent we have to construct by the stepcase, i.e.\ $(\Pi\vdash^\alpha\Delta )[n\setminus \alpha]$ where $\alpha$ depends on the applications of $\circlearrowright\ $- or $\curvearrowright\ $-inferences. Note that $f(n)$, $g(n)$, and $h(n)$ are intended to be arbitrary primitive recursive functions and may be introduced as an extension of the equational theory. 

When a component group contains a single component pair and the endsequents of the basecase and stepcase are the same modulo the substitution of the free parameter we can close the component group using $cl_{sc}$. Alternatively, we can close a group by applying $cl_{LKE}$ if the stepcase is equal to $\top$.  We refer to such a group as a {\em closed group} and any group which is not closed is referred to as an {\em open group}. As a convention, inference rules can only be applied to open groups. Concerning  $\curvearrowright$, it may be the case that the closed group whose end sequent we use to introduce a link has free variables other than the free parameter. We assume correspondence between the free variables of the closed group and the introduced, meaning that in a call 
\begin{align*}
\SchCom{(\Lambda \vdash^{f(n)} \Gamma)\left[ n \setminus g(n) \right]\left[ \bar{x}\setminus\bar{t}\right]}{\fat{[}\ \fat{\pi}\ \fat{]}} 
\end{align*}
of a component with free variables $\bar{x}$ all occurrences of $\bar{x}$ in the proof of 
\begin{align*}
\SchCom{\fat{[}\ (\Lambda  \vdash\Gamma)\left[ n \setminus h(n) \right]\ \fat{]}}{\fat{[}\ \fat{\delta}\ \fat{]}} 
\end{align*}
are replaced with $\bar{t}$.

An $\mathcal{S}\mathit{i}\mathbf{LK}$-derivation is a sequence of $\mathcal{S}\mathit{i}\mathbf{LK}$ inferences rules ending in a component collection with at least one open component group. An $\mathcal{S}\mathit{i}\mathbf{LK}$-proof ends in a component collection where all components are closed. As we shall show, not every derivation can be extended into a proof. 

\begin{table}
\caption{The linking rules of the $\mathcal{S}\mathit{i}\mathbf{LK}$-calculus. }
\label{tab:Calculus2}
\begin{tabular}{|c|}
\hline\\
\begin{minipage}{0.9\textwidth}
\begin{tiny}
\begin{prooftree}
\AxiomC{$ 
\SchCom{\top}{\fat{[}\ (\Pi \vdash \Delta)\left[ n \setminus 0 \right] \ \fat{]}}, 
\mathbb{\Gamma} 
\ComSep{}{} 
\fat{\Pi} 
$}
\RightLabel{$\circlearrowright$}
\UnaryInfC{$ 
\SchCom{ 
{(\Pi \vdash^{(n+1)} \Delta)\left[ n \setminus n \right]\left[ \bar{x}\setminus\bar{t}\right]}
}{\fat{[}\ (\Pi\vdash \Delta)\left[ n \setminus 0 \right] \ \fat{]}}, 
\mathbb{\Gamma} 
\ComSep{}{} 
\fat{\Pi} 
$}
\end{prooftree}
\end{tiny}

\end{minipage} \\

\\

{\footnotesize where $\bar{x}$ is the vector of all free variables of $(\Pi\vdash\Delta )$ and $\bar{t}$ is an} \\ 
{\footnotesize arbitrary vector of terms which has the same length as $\bar{x}$.} \\

\begin{minipage}{0.9\textwidth}
\begin{tiny}

\begin{prooftree}
\AxiomC{$ 
\SchCom{\top}{\fat{[}\ \fat{\pi}\ \fat{]}}, 
\mathbb{\Gamma} 
\ComSep{}{} 
\fat{\Delta}\ComSep{}{} \SchCom{\fat{[}\ (\Lambda  \vdash\Gamma)\left[ n \setminus h(n) \right]\ \fat{]}}{\fat{[}\ \fat{\delta}\ \fat{]}}
\ComSep{}{} 
\fat{\Pi} 
$}
\RightLabel{$\curvearrowright$}
\UnaryInfC{$ 
\SchCom{(\Lambda \vdash^{f(n)} \Gamma)\left[ n \setminus g(n) \right]\left[ \bar{y}\setminus\bar{t}\right]}{\fat{[}\ \fat{\pi}\ \fat{]}}, 
\mathbb{\Gamma} 
\ComSep{}{} 
 \fat{\Pi'} 
$}
\end{prooftree}
\end{tiny}

\end{minipage} \\

\\

{\footnotesize where $\bar{y}$ is the vector of all free variables of $(\Lambda\vdash\Gamma )$ and $\bar{t}$ is an}\\ 
{\footnotesize arbitrary vector of terms which has the same length as $\bar{y}$. Also, $g$}\\ {\footnotesize and $h$ are arbitrary primitive recursive functions.
}
\\
\hline
\end{tabular}
\end{table}

We also consider a special case of $\mathcal{S}\mathit{i}\mathbf{LK}$-derivations (proofs) which we refer to as {\em pre-proof schema normal form}. A  $\mathcal{S}\mathit{i}\mathbf{LK}$-derivation (proof) is in pre-proof schema normal form if for every application of $Ax_{1}:r$ the context $\fat{\Pi}$ is a $\mathcal{S}\mathit{i}\mathbf{LK}$-proof. This enforces  a stricter order on the construction of components than it is already enforced by the use of the $\curvearrowright$-inference which can be used to construct proof schemata.

Let $\mathcal{I}$ be the customary evaluation function of sequents, i.e.\ $\mathcal{I}(\Delta\vdash\Gamma )\equiv\bigwedge_{F\in\Delta}F\to\bigvee_{F\in\Gamma}F$ for an \LKE-sequent $\Delta\vdash\Gamma$ and assume an $\mathcal{S}\mathit{i}\mathbf{LK}$-proof ending in the component collection
\begin{align*}
\mathbf{C} \equiv \SchCom{\fat{[}\ \fat{\nu_0}\ \fat{]}\ }{\fat{[}\ \fat{\pi_0} \ \fat{]}}
\ComSep{}{} 
\cdots 
\ComSep{}{}
\SchCom{\fat{[}\ \fat{\nu_m}\ \fat{]}\ }{\fat{[}\ \fat{\pi_m} \ \fat{]}}
\end{align*}
such that $\SchCom{\fat{[}\ \fat{\nu_0}\ \fat{]}\ }{\fat{[}\ \fat{\pi_0}\fat{]}}$ is the  last component group closed in the proof of $C$ (In the following we will refer to this component as the {\em leading component}). We extend the evaluation function to the schematic case and define the {\em evaluation function} of a closed component collection similar to \cite{GGentzen1969} by
\begin{align*}
&\mathcal{I}_{\mathcal{S}\mathit{i}\mathbf{LK}}(\mathbf{C})\equiv \mathcal{I}(\fat{\pi_0}),
\end{align*}
if $\nu_0\equiv\top$ and 
\begin{align*}
&\mathcal{I}_{\mathcal{S}\mathit{i}\mathbf{LK}}(\mathbf{C})\equiv \\ 
&\bigwedge_{i=0}^{m} \mathcal{I}(\fat{\pi_i})\wedge
\forall. x\Big(\bigwedge_{i=0}^{m}\big(\mathcal{I}(\fat{\nu_{i}}\left[ n\setminus x \right]) \rightarrow \mathcal{I}(\fat{\nu_{i}}\left[ n\setminus (x+1) \right])\big)\Big)
\rightarrow \forall x. (\mathcal{I}(\fat{\nu_{0}}\left[ n\setminus x \right]) ,
\end{align*}
otherwise. Implicitly, the closure rules imply an order. In general, all closed component groups are considered lower in the implied ordering than open component groups. Essentially, the ordering comes from the $\curvearrowright$-rule which can only be applied if the auxiliary component is closed. For example, a component may be forced to be closed last, and thus, would be consider the top of the implied ordering. 

We use the following denotations for construction of our inference rules. Context variables within schematic sequents will be denoted by uppercase greek letters $\Delta, \Pi$, etc. Context variables within component groups will be denoted by blackboard bold uppercase greek letters $\mathbb{\Delta}, \mathbb{\Pi}$, etc. Context variables within the component collection will be denoted by fat bold uppercase greek letters, $\fat{\Delta}, \fat{\Pi}$, etc. We use bold lowercase greek letters to denote schematic sequents, $\fat{\delta},\fat{\pi}$, etc. The inference rules $\rho_1^{sc}$, $\rho_2^{sc}$, $\rho_1^{bc}$, $\rho_2^{bc}$ apply an $\mathbf{LKE}$ inference rule $\rho$ to the auxiliary sequents to get the main sequent. By the subscript we denote the arity of the inference rule. For example, $(\forall:l)_1^{sc}$ applies the universal quantifier rule to the left side of the stepcase. And finally, we use the following abbreviations: 
\begin{align*}
&\mathbb{\Gamma'} \equiv \left(  \mathbb{\Gamma}, \SchCom{\fat{\nu}}{\fat{[}\fat{\pi} \fat{]}}\right) ,\text{ and} \\ 
&\fat{\Pi'} \equiv  \fat{\Delta}, \ \SchCom{\fat{[}\ (\Lambda \vdash\Gamma)\lbrace n \leftarrow \alpha\rbrace\ \fat{]}}{\fat{[}\ \fat{\delta}\ \fat{]}} \ComSep{}{}\fat{\Pi}.
\end{align*}

The following example illustrates the construction of a simple $\mathcal{S}\mathit{i}\mathbf{LK}$-proof. 
\begin{example}
\label{ex:silkProof}
For the construction of the following $\mathcal{S}\mathit{i}\mathbf{LK}$-proof we use the equational theory $\mathcal{E} \equiv \{ \widehat{f^0}(x)=x;\widehat{f^{s(n)}}(x)=f\widehat{f^n}(x)\}$ and the abbreviations 
\begin{align*}
&\Delta \equiv P(0),\forall x.P(x)\to P(f(x)) \text{ and }\fat{\pi} \equiv \Delta\vdash P(\widehat{f^0(0)}).
\end{align*}

\begin{center}
\begin{tiny}
\begin{prooftree}
\AxiomC{}
\RightLabel{$Ax_{1}:r$}
\UnaryInfC{$
\SchCom{\top}{P(0)\vdash P(0)} 
\ComSep{}{} 
$}
\RightLabel{$\mathcal{E}_{1}^{bc}$}

\UnaryInfC{$
\SchCom{\top}{P(0)\vdash P(\widehat{f^0}(0))} 
\ComSep{}{} 
$}
\RightLabel{$(w:l)_{1}^{bc}$}
\UnaryInfC{$
\SchCom{\top}{P(0),\forall x.P(x)\to P(f(x))\vdash P(\widehat{f^0}(0))} 
\ComSep{}{} 
$}
\RightLabel{$cl_{bc}$}
\UnaryInfC{$
\SchCom{\top}{\fat{[}\Delta\vdash P(\widehat{f^0}(0))\ \fat{]}} 
\ComSep{}{} 
$}
\RightLabel{$Ax:l$}
\UnaryInfC{$
\SchCom{P(f\widehat{f^n}(0))\vdash^{s(n)} P(f\widehat{f^n}(0))}{\fat{[}\ \fat{\pi} \ \fat{]}} 
\ComSep{}{} 
$}
\RightLabel{$br$}
\UnaryInfC{$
\SchCom{\top}{\fat{[}\ \fat{\pi} \ \fat{]}},  
\SchCom{P(f\widehat{f^n}(0))\vdash^{s(n)} P(f\widehat{f^n}(0))}{\fat{[}\ \fat{\pi} \ \fat{]}} 
\ComSep{}{} 
$}
\RightLabel{$\circlearrowright$}
\UnaryInfC{$
\SchCom{\Delta\vdash^{s(n)} P(\widehat{f^n}(0)) }{\fat{[}\ \fat{\pi} \ \fat{]}}, 
\SchCom{P(f\widehat{f^n}(0))\vdash^{s(n)} P(f\widehat{f^n}(0))}{\fat{[}\ \fat{\pi} \ \fat{]}} 
\ComSep{}{} 
$}
\RightLabel{$(\rightarrow:l)_{2}^{sc}$}
\UnaryInfC{$
\SchCom{\Delta,P(\widehat{f^n}(0))\to P(f\widehat{f^n}(0))\vdash^{s(n)} P(f\widehat{f^n}(0))}{\fat{[}\ \fat{\pi} \ \fat{]}} 
\ComSep{}{} 
$}
\RightLabel{$(\forall:l)_{1}^{sc}$}
\UnaryInfC{$
\SchCom{\Delta,\forall x.P(x)\to P(f(x))\vdash^{s(n)} P(\widehat{f^{s(n)}}(0))}{\fat{[}\ \fat{\pi} \ \fat{]}} 
\ComSep{}{} 
$}
\RightLabel{$(c:l)_{1}^{sc}$}
\UnaryInfC{$
\SchCom{\Delta\vdash^{s(n)} P(\widehat{f^{s(n)}}(0))}{\fat{[}\ \fat{\pi} \ \fat{]}} 
\ComSep{}{} 
$}
\RightLabel{$cl_{sc}$}

\UnaryInfC{$
\SchCom{\fat{[}\ \Delta \vdash P(\widehat{f^{s(n)}}(0)) \ \fat{]}}{\fat{[}\ \fat{\pi} \ \fat{]}} 
\ComSep{}{} 
$}
\end{prooftree}
\end{tiny}
\end{center}

By applying the evaluation function $\mathcal{I}_{\mathcal{S}\mathit{i}\mathbf{LK}}$ we get 
\begin{align*}
&\mathcal{I}_{\mathcal{S}\mathit{i}\mathbf{LK}}\left( \SchCom{\fat{[}\ \Delta \vdash P(\widehat{f^{s(n)}}(0)) \ \fat{]}}{\fat{[}\ \fat{\pi} \ \fat{]}} \right) \equiv \\
& \bigg( \big( \Delta'\to P(\widehat{f^0}(0))\big) \wedge \forall x. \Big( \big( \Delta'\to P(\widehat{f^x}(0))\big) \to 
\big( \Delta'\to P(\widehat{f^{x+1}}(0)) \big) \Big) \bigg) \to \\
& \big( \Delta'\to \forall n.P(\widehat{f^n}(0))\big)
\end{align*}
where $\Delta' \equiv P(0)\wedge \forall x.P(x)\to P(f(x))$.
Essentially, what we have proven with this $\mathcal{S}\mathit{i}\mathbf{LK}$-proof is the sequent  \[P(0),\forall x.P(x)\to P(f(x))\vdash \forall n.P(\widehat{f^n}(0)).\]
\end{example}

By extending the equational theory and by applying the $\curvearrowright$-inference we can easily strengthen the provable sequent of Example~\ref{ex:silkProof}. 

\begin{example}
Let $\mathcal{E}$ be the equational theory of Example~\ref{ex:silkProof}, $\fat{\Pi}$ the proof of Example~\ref{ex:silkProof} and 
\begin{align*}
\Delta \equiv &\; P(0),\forall x.P(x)\to P(f(x)), \\
\mathbb{\Gamma}\equiv &\; \SchCom{\fat{[}\ \Delta \vdash P(\widehat{f^{s(n)}}(0)) \ \fat{]}}{\fat{[}\ \Delta\vdash P(\widehat{f^0(0)}) \ \fat{]}}, \\ 
2\equiv &\; s(s(0)), \\
\fat{\pi}'\equiv &\; \Delta\vdash P(\widehat{f^{2^{0}}}(0)), \\
\mathcal{E}'\equiv &\; \mathcal{E}\cup\{ \widehat{f^{2^0}}(x)=f(x),\widehat{f^{2^{s(n)}}}(x)=\widehat{f^{2^n}}\widehat{f^2}(x)\}.
\end{align*}

\begin{tiny}
\begin{prooftree}
\AxiomC{$\fat{\Pi}$}
\RightLabel{$Ax_1:r$}

\UnaryInfC{$\SchCom{\top}{P(f(0))\vdash P(f(0))} 
\ComSep{}{}  \mathbb{\Gamma}
\ComSep{}{} 
$}
\RightLabel{$\mathcal{E}_{1}^{bc}$}

\UnaryInfC{$
\SchCom{\top}{P(f(0))\vdash P(\widehat{f^{2^0}}(0))} 
\ComSep{}{}  \mathbb{\Gamma}
\ComSep{}{} 
$}
\RightLabel{$Ax_{2}:r$}
\UnaryInfC{$
\SchCom{\top}{P(0)\vdash P(0)} 
,\SchCom{\top}{P(f(0))\vdash P(\widehat{f^{2^0}}(0))} 
\ComSep{}{}  \mathbb{\Gamma}
\ComSep{}{} $}
\RightLabel{$(\rightarrow:l)_{2}^{bc}$}
\UnaryInfC{$
\SchCom{\top}{P(0), P(0)\rightarrow P(f(0))\vdash P(\widehat{f^{2^0}}(0))} 
\ComSep{}{}  \mathbb{\Gamma}
\ComSep{}{} $}
\RightLabel{$(\forall:l)_{1}^{bc}$}
\UnaryInfC{$
\SchCom{\top}{P(0), \forall x (P(x)\rightarrow P(f(x)))\vdash P(\widehat{f^{2^0}}(0))} 
\ComSep{}{}  \mathbb{\Gamma}
\ComSep{}{} $}
\RightLabel{$cl_{bc}$}
\UnaryInfC{$
\SchCom{\top}{\fat{[}\ \Delta\vdash P(\widehat{f^{2^{0}}}(0))\ \fat{]}} 
\ComSep{}{}  \mathbb{\Gamma}
\ComSep{}{}
$}
\RightLabel{$\curvearrowright$}
\UnaryInfC{$
\SchCom{\Delta\vdash^{2^{s(n)}} P(\widehat{f^{2^{s(n)}}}(0))}{\fat{[}\ \fat{\pi'} \ \fat{]}}
\ComSep{}{}  \mathbb{\Gamma}
\ComSep{}{} 
$}
\RightLabel{$cl_{sc}$}
\UnaryInfC{$
\SchCom{\fat{\Big[}\Delta\vdash^{2^{s(n)}} P(\widehat{f^{2^{s(n)}}}(0))\fat{\Big]}}{\fat{[}\ \fat{\pi'} \ \fat{]}}
\ComSep{}{}  \mathbb{\Gamma}
\ComSep{}{} 
$}
\end{prooftree}
\end{tiny}
\end{example}
Notice that we were able to get a much stronger theorem without cuts or significantly extending the proof. Though, the instantiation of the second proof will be exponentially larger than an instantiation of the first proof for the same value of $n$, the second proof is only double the number of inferences. This is precisely the method one can use to formalize either Orevkov's proof~\cite{VOrevkov1991} or Boolos' proof~\cite{GBoolos1984}.

\subsection{From $\mathcal{S}\mathit{i}\mathbf{LK}$-Proof to Proof Schema}
\label{Sec:Translation}

It is possible to construct a proof schema from any $\mathcal{S}\mathit{i}\mathbf{LK}$-Proof, though it is much easier to perform the translation from $\mathcal{S}\mathit{i}\mathbf{LK}$-Proof in pre-proof schema normal form. We now show that every $\mathcal{S}\mathit{i}\mathbf{LK}$-Proof has a pre-proof schema normal form. 

\begin{lemma}
\label{lem:PPSNF}
Let $\Phi$ be a $\mathcal{S}\mathit{i}\mathbf{LK}$-Proof of a component collection $\mathbf{C}$. Then there exists a $\mathcal{S}\mathit{i}\mathbf{LK}$-Proof $\Phi'$  of $\mathbf{C}$ in pre-proof schema normal form.
\end{lemma}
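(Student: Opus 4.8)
The plan is to prove the statement by induction on the structure of the $\mathcal{S}\mathit{i}\mathbf{LK}$-proof $\Phi$, focusing on the applications of $Ax_{1}:r$ that violate the pre-proof schema normal form condition. Recall that $\Phi$ is in pre-proof schema normal form precisely when, for every application of $Ax_{1}:r$, the context $\fat{\Pi}$ is itself a $\mathcal{S}\mathit{i}\mathbf{LK}$-proof. The key observation is that $Ax_{1}:r$ introduces a \emph{fresh} component group to the left of the existing collection $\fat{\Pi}$, and the inference rules of Tables~\ref{tab:Calculus} and~\ref{tab:Calculus2} act locally on open component groups. Since closed groups in $\fat{\Pi}$ are never reopened, the derivation steps applied to the new group are largely independent of the steps needed to complete $\fat{\Pi}$. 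First I would make this independence precise by defining a measure counting the number of $Ax_{1}:r$ applications whose context is not yet a complete proof, and argue that we can always permute inferences so as to strictly decrease this measure.

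The central technical step is a \textbf{permutation argument}. Whenever an application of $Ax_{1}:r$ has a context $\fat{\Pi}$ that is not a proof, there must be later inferences in $\Phi$ that operate on the open groups of $\fat{\Pi}$ and eventually close them. Because these inferences target component groups disjoint from the newly introduced group $\SchCom{\top}{A\vdash A}$, I would show they commute with all the inferences that work on the new group. Concretely, I would prove a commutation lemma: if an inference $\rho$ acts on a group in $\fat{\Pi}$ and a subsequent inference $\rho'$ acts on the group introduced by $Ax_{1}:r$ (or vice versa), then their order can be swapped without changing the final component collection $\mathbf{C}$. The only rules requiring care are the linking rules $\circlearrowright$ and $\curvearrowright$, since $\curvearrowright$ reads the end-sequent of a \emph{closed} auxiliary group: I must verify that permutation never moves a $\curvearrowright$-inference above the closure of the group it references. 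This is guaranteed by the ordering implicitly enforced by the closure mechanism described before Table~\ref{tab:Calculus2}, so the referenced group is already closed at the point $\curvearrowright$ is applied and remains so under any admissible permutation.

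Using this commutation lemma, I would rearrange $\Phi$ so that the entire sub-derivation completing $\fat{\Pi}$ into a proof is carried out \emph{before} the offending $Ax_{1}:r$ is applied. After this rearrangement, that particular application of $Ax_{1}:r$ has a context which is a genuine $\mathcal{S}\mathit{i}\mathbf{LK}$-proof, so the measure strictly decreases while the final collection $\mathbf{C}$ is preserved. Iterating until the measure reaches zero yields the desired $\Phi'$ in pre-proof schema normal form.

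I expect the \textbf{main obstacle} to be the treatment of the linking rules, and in particular showing that the permutations respect the dependency structure induced by $\curvearrowright$. Since $\curvearrowright$ consumes a closed auxiliary component and the correspondence of free variables $\bar{y}\setminus\bar{t}$ couples the two groups, I must confirm that pulling the construction of $\fat{\Pi}$ earlier does not inadvertently place a $\curvearrowright$-inference before the group it depends on has been closed. The safest route is to carry out the induction with respect to the implied ordering of closed groups rather than the raw inference sequence, so that the well-foundedness of that ordering directly justifies termination of the permutation process and rules out any circular dependency among the links.
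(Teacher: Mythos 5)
Your proposal is correct and follows essentially the same route as the paper: both rearrange $\Phi$ by permuting inferences that act on distinct component groups, using the fact that the only cross-group dependency, the $\curvearrowright$-rule, always points to a group that is already closed, so processing groups in the order of their closure ($cl_{sc}$/$cl_{LKE}$) is well-founded. The paper performs this as a single global pass (collect the rules identified with the first-closed group, move them to the top, then the next-closed group, and so on) rather than iterating a measure on offending $Ax_1:r$ applications, but your closing refinement --- inducting on the implied ordering of closed groups rather than the raw inference sequence --- is exactly the paper's argument.
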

\begin{proof}
We prove the statement by rearranging the application of the $\mathcal{S}\mathit{i}\mathbf{LK}$ rules. Let 
\begin{align*}
\mathbf{C} \equiv \SchCom{\fat{[}\ \fat{\nu_0}\ \fat{]}\ }{\fat{[}\ \fat{\pi_0} \ \fat{]}}
\ComSep{}{} 
\cdots 
\ComSep{}{}
\SchCom{\fat{[}\ \fat{\nu_m}\ \fat{]}\ }{\fat{[}\ \fat{\pi_m} \ \fat{]}}
\end{align*}
be the ending component collection of $\Phi$. We identify each component group $\mathcal{CG}_i=\SchCom{\fat{[}\ \fat{\nu_i}\ \fat{]}\ }{\fat{[}\ \fat{\pi_i} \ \fat{]}}$ with its ancestors in $\Phi$, i.e.\ all component groups that are connected via a $\mathcal{S}\mathit{i}\mathbf{LK}$ rule, exempting the closed component groups of the $\curvearrowright$ rule, to $\mathcal{CG}_i$. Afterwards, we find the component group $\mathcal{CG}_i$ which is closed first, i.e.\ reading top to bottom the component group to which $cl_{st}$ or $cl_{LKE}$ is applied first. Since there is no other component closed earlier there is no $\curvearrowright$ rule identified with $\mathcal{CG}_i$, thus we can consider all rules identified with $\mathcal{CG}_i$ to be independent. This implies that we can rearrange $\Phi$ such that all rules identified with $\mathcal{CG}_i$ are at the top\footnote{In general, the context is not empty. Since the rules, exempting the $\curvearrowright$, are independent from the context, we can always adjust the context.}. This part of the proof will not change any more. 
Now, we look again for the topmost $cl_{st}$ or $cl_{LKE}$ rule apart from the one we already considered. The corresponding component group $\mathcal{CG}_j$ and its identified rules contain only $\curvearrowright$ rules that link to components that are already rearranged and, hence, we can shift all rules identified with $\mathcal{CG}_j$ directly after the already rearranged ones. If we repeat this procedure, we end up with a proof in pre-proof schema normal form. 
\end{proof}

The important property of pre-proof schema normal form is that the construction of components is organized such that between any two closure rules is an $\LKS$-proof.  

\begin{theorem}
Let $\Phi$ be an $\mathcal{S}\mathit{i}\mathbf{LK}$-Proof of the component collection \[\SchCom{\fat{[}\ \fat{\nu_0}\ \fat{]}\ }{\fat{[}\ \fat{\pi_0} \ \fat{]}}
\ComSep{}{} 
\cdots 
\ComSep{}{}
\SchCom{\fat{[}\ \fat{\nu_m}\ \fat{]}\ }{\fat{[}\ \fat{\pi_m} \ \fat{]}}\]
such that $\nu_0\neq\top$ is the leading component, then there exists a proof schema $\left\langle \mathbf{C}_{0},\mathbf{C}_{1},\cdots,\mathbf{C}_{k}\right\rangle$, for $k\leq m$, where  for every $0\leq i \leq k$ there exists $0\leq i \leq j \leq m$, where the end sequents of $C_{i}.2$ and $\pi_j$ match as well as the end sequents of $C_{i}.3$ and $\nu_j$.
\end{theorem}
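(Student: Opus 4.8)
The plan is to first normalize with Lemma~\ref{lem:PPSNF} and then read the proof schema off the derivation block by block. First I would replace $\Phi$ by an $\mathcal{S}\mathit{i}\mathbf{LK}$-proof $\Phi'$ of the same component collection in pre-proof schema normal form. By the property noted immediately after that lemma, $\Phi'$ splits into contiguous blocks, each starting at an axiom inference ($Ax_1:r$ or $Ax_2:r$) that opens a fresh component group and ending at the closure rule ($cl_{sc}$ or $cl_{LKE}$) of that group, with a genuine \LKS-proof sitting between consecutive closures. Reading top to bottom, these blocks are closed one after another, and by definition the block closed last is exactly the leading component, whose stepcase satisfies $\nu_0\neq\top$.

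Next I would fix the indexing. I would take $\mathbf{C}_0$ to be the leading (last-closed) group and index the remaining relevant groups $\mathbf{C}_1,\ldots,\mathbf{C}_k$ by decreasing closure time, so that earlier-closed groups receive higher indices. Distinctness of the proof symbols $\mathbf{C}_i.1$ is immediate. The crucial structural fact is that the $\curvearrowright$-rule may be applied only after its auxiliary group has been closed, so every cross-reference created while a group is built points to a strictly earlier-closed, hence strictly higher-indexed, group; this is precisely the condition that $\mathbf{C}_i.3$ links only to $\mathbf{C}_i.1$ or to $\mathbf{C}_{i'}.1$ with $i<i'$. Component groups never reached from the leading one along these links are redundant and are discarded, which together with the leading-first listing of the collection yields both the bound $k\le m$ and the monotone index matching $i\le j$ asserted in the statement.

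For each retained block, with associated group $\mathcal{CG}_j=\SchCom{\fat{[}\ \fat{\nu_j}\ \fat{]}\ }{\fat{[}\ \fat{\pi_j}\ \fat{]}}$ matched to $\mathbf{C}_i$ where $i\le j$, I would then extract the two \LKS-proofs. The basecase side is built exclusively by $Ax_1:r$, $Ax_2:r$, the $\rho_i^{bc}$-rules and $cl_{bc}$, none of which introduces a link, so $\fat{\pi_j}$ is a link-free \LKE-proof and serves as $\mathbf{C}_i.2$, trivially containing only $\emptyset$-links. The stepcase side, built after the basecase is closed, yields $\fat{\nu_j}$ as $\mathbf{C}_i.3$: I would translate each $\circlearrowright$-leaf into a self-link $(\mathbf{C}_i.1,n,\bar{t})$ and each $\curvearrowright$-leaf into a link $(\mathbf{C}_{i'}.1,g(n),\bar{t})$ with $i<i'$, where $g$ is the primitive recursive function recorded by that $\curvearrowright$-inference and $\bar{t}$ is the vector substituted for the free variables of the called component, following the correspondence fixed after the $\curvearrowright$-rule. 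Since the link arguments $n$ and $g(n)$ satisfy $\mathcal{V}(n),\mathcal{V}(g(n))\subseteq\{n\}$, these are exactly the $\{n\}$-links required of a stepcase.

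It remains to verify that $\langle\mathbf{C}_0,\ldots,\mathbf{C}_k\rangle$ is a well-formed proof schema with the claimed matching of end sequents. The rule $cl_{sc}$ closes a one-component group only when its basecase and stepcase end sequents agree modulo substitution of the free parameter, i.e.\ they are $(\Pi\vdash\Delta)[n\setminus 0]$ and $(\Pi\vdash\Delta)[n\setminus\alpha]$; this is exactly the requirement that $\mathbf{C}_i.2$ and $\mathbf{C}_i.3$ prove $S(0,\bar{x})$ and $S(k,\bar{x})$, so the end sequents of $\mathbf{C}_i.2$ and $\pi_j$, and of $\mathbf{C}_i.3$ and $\nu_j$, coincide by construction. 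The main obstacle I anticipate is bookkeeping rather than conceptual: one must check that the induced link graph is acyclic apart from the $\circlearrowright$ self-loops, and that the substitutions attached to the $\circlearrowright$- and $\curvearrowright$-inferences can be realized simultaneously as the term arguments $\bar{a},\bar{b}$ of the schema links without violating the eigenvariable conditions carried along each block. This is exactly where the discipline enforced by the closure mechanism, together with the free-variable correspondence fixed after the $\curvearrowright$-rule, does the real work.
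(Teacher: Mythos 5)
Your proposal is correct and follows essentially the same route as the paper: normalize via Lemma~\ref{lem:PPSNF}, prune the redundant component groups, and translate each remaining block (basecase rules above $cl_{bc}$, stepcase rules between $cl_{bc}$ and $cl_{sc}$) into a proof schema component, ordered by closure time, i.e.\ right to left in the normal form. The only cosmetic difference is the pruning criterion --- you discard groups unreachable from the leading component, while the paper discards exactly those whose stepcase is $\fat{[}\ \fat{]}$ (which $\curvearrowright$ can never target); both sets are closed under links and yield a valid schema with $k\le m$.
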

\begin{proof}
By Lemma~\ref{lem:PPSNF} we know that $\Phi$ has a pre-proof schema normal form $\Phi'$. Note that, in a pre-proof schema normal form the leading component, i.e.\ $\nu_0$, is the leftmost component. In $\Phi'$, we delete all component groups whose stepcase is equal to $\fat{[}\ \fat{]}$ and get $\Psi$ which contains $k$ component groups. This is allowed because $\curvearrowright$ cannot link to component groups with stepcase $\fat{[}\ \fat{]}$. We construct the proof schema directly from $\Psi$ where each proof schema component corresponds to a component group of $\Psi$. A proof schema component $\mathbf{C}_i$ is constructed from a component proof $\mathcal{CG}_j=\SchCom{\fat{[}\ \fat{\nu_j}\ \fat{]}\ }{\fat{[}\ \fat{\pi_j} \ \fat{]}}$ as follows: $\mathbf{C}_i.2$ is the proof containing all rules that are identified with $\mathcal{CG}_j$ and that are applied at the top of $cl_{bc}$. $\mathbf{C}_i.3$ is the proof containing all rules that are identified with $\mathcal{CG}_j$ and that are between $cl_{bc}$ and $cl_{sc}$. We translate each component group according to the order of the pre-proof schema normal form, i.e.\ from right to left, to a proof schema component and construct thereby the proof schema of the theorem.
\end{proof}
\section{Properties of the Calculus}
\label{sec:SoundComplete}

In this section we discuss the decision problem for validity, soundness of the calculus, and completeness with respect to $k$-simple proof schemata.

\subsection{Decidability}
Following the formalization of our calculus, we can state a semi-decidability theorem. This is because we distinguish even between component collections where the leading component is equal but there is a variation in the other components.
\begin{theorem}
Let $\fat{\Pi}$ be a collection of closed components that has a $\mathcal{S}\mathit{i}\mathbf{LK}$-proof then we find the proof in a finite number of inferences.
\end{theorem}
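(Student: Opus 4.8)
The plan is to prove semi-decidability by giving a terminating search procedure that enumerates all $\mathcal{S}\mathit{i}\mathbf{LK}$-proofs whose ending component collection has the prescribed closed leading component $\fat{\Pi}$. First I would observe that by Lemma~\ref{lem:PPSNF} it suffices to search for a proof in pre-proof schema normal form: if any $\mathcal{S}\mathit{i}\mathbf{LK}$-proof of $\fat{\Pi}$ exists, then one in pre-proof schema normal form exists of the \emph{same} ending collection, so restricting the search to this normal form loses no completeness while drastically constraining the shape of the derivation. In pre-proof schema normal form the proof decomposes into blocks, each block being an $\LKS$-proof sitting between two closure rules ($cl_{bc}$, $cl_{sc}$, $cl_{LKE}$), so the global search reduces to a finite sequence of independent local searches for ordinary $\LKS$-proofs of fixed end-sequents.

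The key point making the search finite is that the target collection $\fat{\Pi}$ is \emph{fixed and closed}, so every end-sequent we must derive is determined in advance. Concretely, I would proceed component group by component group, following the implied ordering induced by the $\curvearrowright$-rule (right to left in the normal form). For each group the closed basecase sequent $(\Pi\vdash\Delta)[n\setminus 0]$ and, via $cl_{sc}$, the matching stepcase sequent $(\Pi\vdash^{\alpha}\Delta)[n\setminus\alpha]$ are both pinned down by $\fat{\Pi}$; hence for each block we are searching for an $\LKS$-proof of a \emph{specified} end-sequent. Because $\LKS$ is $\LKE$ augmented only with $\emptyset$- or $\{n\}$-links whose end-sequents are fixed by the already-constructed (lower) groups, each such block-level search is a bounded proof search over a finite rule set with fully determined conclusions, which terminates.

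The central step, and the main obstacle, is controlling the otherwise unbounded branching introduced by the rules carrying arbitrary parameters: the $\circlearrowright$- and $\curvearrowright$-rules may substitute an arbitrary term vector $\bar{t}$ for the free variables $\bar{x}$ (resp.\ $\bar{y}$), and $f,g,h$ range over arbitrary primitive recursive functions, while weakening and contraction permit context inflation. I would argue that, once the closed target $\fat{\Pi}$ fixes the conclusion of each component group, these choices are no longer free: the substitution $[\bar{x}\setminus\bar{t}]$ and the functions $g,h$ occurring in a $\curvearrowright$-call are forced to agree with the fixed end-sequent of the closed auxiliary group it links to, so only finitely many instantiations are compatible with the prescribed collection. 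The remaining task is then to bound the propositional/quantifier content of each block, which follows from the subformula structure of the fixed end-sequents together with the fact that links contribute no new formulas beyond those recorded in the target.

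Putting these together, the procedure is: (1) normalize the search to pre-proof schema normal form via Lemma~\ref{lem:PPSNF}; (2) read off from $\fat{\Pi}$ the finite list of component groups together with their fixed basecase and stepcase end-sequents and the implied closure ordering; (3) for each group, in that order, perform a terminating $\LKS$ proof search for its fixed end-sequent, where every link, substitution, and primitive-recursive-function choice is constrained to match an already-fixed lower group; and (4) succeed exactly when all groups close. Each stage involves only finitely many choices with determined conclusions, so if a proof exists the search locates it after finitely many inferences, establishing semi-decidability.
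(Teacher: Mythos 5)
Your skeleton---reduce to pre-proof schema normal form via Lemma~\ref{lem:PPSNF}, then process component groups right to left, treating the already-closed groups that $\curvearrowright$ links to as fixed---is exactly the paper's decomposition. The gap is in how you discharge the search inside each block. You claim each block-level search is \emph{bounded} and \emph{terminating}: ``a bounded proof search over a finite rule set with fully determined conclusions, which terminates,'' that ``only finitely many instantiations are compatible with the prescribed collection,'' and that the quantifier content of a block can be bounded by ``the subformula structure of the fixed end-sequents.'' None of this holds. \LKE\ contains cut, so there is no subformula property; the rules $\forall:l$ and $\exists:r$ instantiate with arbitrary terms; the $\mathcal{E}$-rule rewrites terms without bound; and the intermediate sequents of a block are not pinned down by its closed end-sequent---in particular the conclusion of a $\circlearrowright$- or $\curvearrowright$-inference sits in the \emph{middle} of the stepcase derivation, not at its end, so the choices of $\bar{t}$, $f$, $g$, $h$ there are not forced by $\fat{\Pi}$. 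Worse, if your claims were true they would give a \emph{decision procedure} for provability of closed collections; taking a single group closed by $cl_{LKE}$, this would decide \LKE-provability and hence first-order validity, contradicting Church's theorem. So the argument as stated proves too much and cannot be repaired by tightening the bookkeeping.

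The repair is to drop boundedness entirely, since the statement is only semi-decidability. This is what the paper does after the same right-to-left reduction: the basecase of each group is an \LKE-proof search problem, which is semi-decidable; the rightmost stepcase contains no $\curvearrowright$-inferences, so it is \LKE\ plus a theory axiom for $\circlearrowright$ and again semi-decidable; in every later group one treats the $\curvearrowright$-links to the already-constructed groups as further theory axioms, staying in a semi-decidable fragment. The point is that the set of candidate inferences at each step is recursively enumerable (terms, $\mathcal{E}$-steps, and primitive recursive function indices are all countable), so a dovetailed search is guaranteed to succeed whenever a proof exists; since there are finitely many components, chaining these finitely many semi-decision procedures finds the proof after finitely many inferences. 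Replacing your ``bounded/terminating'' claims by ``recursively enumerable/dovetailed'' yields precisely the paper's argument.
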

\begin{proof}
By Lemma~\ref{lem:PPSNF} we can construct proofs from right to left. In general, the basecase is an \LKE-sequent that is itself semi-decidable. The rightmost component cannot contain any $\curvearrowright$-inferences in the stepcase such that it behaves as an \LKE-proof plus an additional theory axiom for the $\circlearrowright$-rule and is, therefore, semi-decidable. In the next component's stepcase, we consider all $\curvearrowright$-rules again as theory axioms, such that we end up in a semi-decidable fragment again. By the finite number of components the semi-decidability of $\fat{\Pi}$ follows. 
\end{proof}
The more interesting decidability property is of course whether we are able to extend the number of components on the right of a given component such that the new collection of components has a $\mathcal{S}\mathit{i}\mathbf{LK}$-proof. To see that this is not even semi-decidable we will formalize Robinson arithmetic \cite{RRobinson1950} in our system. 

\begin{theorem}
Let $C$ be a closed component group. Then deciding if there exists a closed component collection $\fat{\Pi}$ such that  $C \ComSep{}{} \fat{\Pi}$ is  $\mathcal{S}\mathit{i}\mathbf{LK}$-provable is undecidable. 
\end{theorem}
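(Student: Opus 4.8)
The plan is to prove undecidability by reduction from a known undecidable problem, namely the halting problem or the unsolvability of the word problem, routed through the formalization of Robinson arithmetic $Q$ announced in the sentence immediately preceding the statement. The key observation is that the $\curvearrowright$-inference lets a newly constructed component \emph{call} an already-closed component to the right, and the closure rules $cl_{sc}$ and $\circlearrowright$ let us build components encoding primitive-recursive (indeed arbitrary recursive) numeric functions via the equational theory $\mathcal{E}$. So the question ``can the fixed closed group $C$ be extended by some closed collection $\fat{\Pi}$ so that $C \ComSep{}{} \fat{\Pi}$ is provable?'' becomes, under a suitable encoding, the question ``does a certain $\Sigma_1$ statement of arithmetic have a proof schema,'' which by the earlier propositions corresponds to $k$-simple provability in $\mathbf{LKIE}$, and ultimately to truth of an arithmetical sentence whose truth set is not even co-recursively enumerable.

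First I would fix the reduction source. I would take an instance of a $\Sigma_1$-complete problem, concretely a single-tape Turing machine $M$ and ask whether $M$ halts on empty input; equivalently, whether $Q \vdash \exists x.\, \mathrm{Halt}_M(x)$ for the standard arithmetization of the halting predicate, which is $\Sigma_1$ and whose negation is not r.e. Next I would design the target component group $C$: its basecase/stepcase pair should encode the end sequent asserting $\neg \exists x.\,\mathrm{Halt}_M(x)$ (or, dually, the proof obligation forcing a refutation), so that a $\mathcal{S}\mathit{i}\mathbf{LK}$-proof of $C \ComSep{}{} \fat{\Pi}$ exists precisely when the auxiliary collection $\fat{\Pi}$ supplies, via $\curvearrowright$-links, the inductive witnesses certifying that $M$ does \emph{not} halt. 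The soundness direction (Proposition~\ref{prop:sound}) guarantees that any such proof, after instantiating the free parameter, yields genuine $\mathbf{LK}$-proofs of each numeral instance, so a proof exists only if the corresponding arithmetical statement is true; the completeness result for $k$-simple induction (the two propositions of Section~\ref{sec:evalInter}, together with the construction of Section~\ref{Sec:Translation}) guarantees the converse, that every $k$-simple inductive argument in $Q$ is captured by some extension $\fat{\Pi}$.

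The two directions of the reduction then read as follows. If $M$ does not halt, the non-halting can be certified by an induction over computation steps (a $k$-simple induction, since only the step-count eigenvariable is inductive), which by the quoted propositions yields a proof schema and hence, by the translation theorem of Section~\ref{Sec:Translation} run in reverse, an $\mathcal{S}\mathit{i}\mathbf{LK}$-proof of $C \ComSep{}{} \fat{\Pi}$ for an appropriate $\fat{\Pi}$. Conversely, any $\mathcal{S}\mathit{i}\mathbf{LK}$-proof of $C \ComSep{}{} \fat{\Pi}$ evaluates, by soundness, to a family of $\mathbf{LK}$-proofs witnessing non-halting, so $M$ does not halt. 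Thus solvability of the extension problem for $C$ decides the complement of the halting problem, which is not decidable (indeed not even r.e.); hence the extension problem is undecidable.

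The main obstacle I expect is the \emph{faithfulness} of the encoding, i.e.\ ensuring that the class of auxiliary collections $\fat{\Pi}$ admissible to the right of $C$ corresponds exactly to $k$-simple inductions and no more. One must check that the one-parameter restriction ($\mathcal{V}(t)\subseteq\{n\}$, the $k$-simplicity constraint) is genuinely respected by the arithmetization of Turing-machine computations, so that no unintended multi-parameter induction sneaks in and no legitimate non-halting certificate is excluded; this is where the ``$k$-simple'' qualifier does real work and where the encoding of the transition relation of $M$ into the equational theory $\mathcal{E}$ must be laid out carefully. A secondary technical point is confirming that Robinson arithmetic $Q$ is strong enough to carry the $\Sigma_1$ representation while remaining within the fragment the calculus can formalize; this is standard but must be stated explicitly, since essential undecidability of $Q$ is precisely what rules out a decision procedure.
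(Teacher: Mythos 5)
Your proposal takes a genuinely different route from the paper, but it contains a fatal gap. The paper's proof interprets Robinson arithmetic $Q$ inside the system (successor and zero are already given by the $\omega$ sort, addition and multiplication are added to the equational theory $\mathcal{E}$, and the axiom $\forall x (x=0 \vee \exists y(s(y)=x))$ is supplied by the link mechanism) and then invokes the \emph{essential} undecidability of $Q$: every consistent theory extending or interpreting $Q$ has an undecidable set of theorems, and the set of component groups $C$ admitting a provable extension is precisely such a theorem set (consistency coming from soundness). Essential undecidability is needed, rather than mere undecidability of $Q$, because the auxiliary collection $\fat{\Pi}$ may add arbitrary inductive lemmas, so the extension problem corresponds to provability in a theory possibly much stronger than $Q$. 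Your proposal instead reduces from the \emph{complement} of the halting problem, and that direction cannot be made to work.

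The step that fails is: ``If $M$ does not halt, the non-halting can be certified by an induction over computation steps.'' This is false. Non-halting is a $\Pi_1$ statement, and by G\"odel's incompleteness theorem no consistent recursively enumerable proof system --- in particular not the $k$-simple induction fragment of Peano arithmetic that $\mathcal{S}\mathit{i}\mathbf{LK}$ captures --- proves all true $\Pi_1$ statements. Concretely, the machine that searches for a PA-proof of $0=1$ never halts (assuming PA is consistent), yet its non-halting is unprovable in PA, let alone by a single $k$-simple induction; proving non-halting requires a \emph{provably} inductive invariant, which need not exist. So for such $M$ no extension $\fat{\Pi}$ exists even though $M$ does not halt, and your claimed equivalence breaks. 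A quick sanity check shows no reduction from non-halting can succeed: the extension problem is itself $\Sigma_1$ (enumerate all finite collections $\fat{\Pi}$ together with candidate derivations and check them), so it cannot be many-one hard for a $\Pi_1$-complete problem; your argument, if it worked, would show the extension problem is not even r.e., contradicting this. The repair is to run the reduction in the dual direction you parenthetically mention: let $C$ encode the $\Sigma_1$ sentence $\exists x.\,\mathrm{Halt}_M(x)$. If $M$ halts, the finite halting computation itself yields a proof (no induction is needed, mirroring $\Sigma_1$-completeness of $Q$); conversely, if some $C \ComSep{}{} \fat{\Pi}$ is provable, soundness (Proposition~\ref{prop:sound}) gives truth and hence halting. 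This reduces the halting problem itself --- not its complement --- to the extension problem, which suffices for undecidability and is, in spirit, what the paper's appeal to Robinson arithmetic accomplishes.
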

\begin{proof}
The $\omega$ sort obeys the axioms of Robinson arithmetic concerning successor and zero. We can add the addition and multiplication axioms to the equational theory. The most important axiom of Robinson arithmetic $\forall x ( x=0 \vee \exists y ( s(y) = x))$ is intrinsically part of the link mechanism. Because  Robinson arithmetic is essentially undecidable then showing that there is an extension of a given component collection to an $\mathcal{S}\mathit{i}\mathbf{LK}$-provable collection must be as well.
\end{proof}

\subsection{Soundness \& Completeness}
We provide a proof of soundness using our translation procedure of Section~\ref{Sec:Translation}.
\begin{theorem}[Soundness of the $\mathcal{S}\mathit{i}\mathbf{LK}$-calculus]
If a closed component collection $C$ is $\mathcal{S}\mathit{i}\mathbf{LK}$-provable then it is valid. 
\end{theorem}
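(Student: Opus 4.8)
The plan is to reduce soundness of the $\mathcal{S}\mathit{i}\mathbf{LK}$-calculus to the already-established soundness of proof schemata (Proposition~\ref{prop:sound}) via the translation of Section~\ref{Sec:Translation}. First I would take a $\mathcal{S}\mathit{i}\mathbf{LK}$-proof $\Phi$ of the closed component collection $C$ and apply the theorem of Section~\ref{Sec:Translation} to obtain a proof schema $\Psi = \langle \mathbf{C}_0,\ldots,\mathbf{C}_k\rangle$ whose components match the component groups of $\Phi$ in the sense that the end sequents of $\mathbf{C}_i.2$ and the relevant $\pi_j$ agree, as do those of $\mathbf{C}_i.3$ and $\nu_j$. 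The semantic content that the evaluation function $\mathcal{I}_{\mathcal{S}\mathit{i}\mathbf{LK}}(C)$ assigns to the collection is, by construction, precisely the implication asserting that the base cases together with the step-case implications entail the universally quantified conclusion $\forall x.\,\mathcal{I}(\fat{\nu_0}[n\setminus x])$ of the leading component; I would argue that this formula is exactly what the associated proof schema $\Psi$ proves.

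The key steps, in order, are as follows. First, invoke Proposition~\ref{prop:sound}: for every $\alpha\in\mathcal{N}$, the evaluation $\Psi[n\setminus\alpha]\downarrow$ is a genuine $\mathbf{LK}$-proof of the end sequent $S(\alpha,\bar{x})\downarrow$, and $\mathbf{LK}$-provability yields validity of that sequent in the standard model of the $\omega$-sort together with the equational theory $\mathcal{E}$. Second, I would show that the family of end sequents $\{S(\alpha,\bar{x})\}_{\alpha\in\mathcal{N}}$, all of which are valid, is exactly the family obtained by instantiating the free parameter $n$ in the stepcase end sequent of the leading component; by the closure condition $cl_{sc}$ the basecase and stepcase end sequents agree modulo the substitution of the free parameter, so the instances cohere into a single schematic statement. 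Third, I would verify that validity of every numeric instance is equivalent to validity of $\mathcal{I}_{\mathcal{S}\mathit{i}\mathbf{LK}}(C)$: in the case $\nu_0\equiv\top$ this is immediate since $\mathcal{I}_{\mathcal{S}\mathit{i}\mathbf{LK}}(C)\equiv\mathcal{I}(\fat{\pi_0})$ is just a closed $\mathbf{LKE}$-sequent proved directly; in the inductive case the formula is the Gentzen-style induction implication, whose antecedent (base cases plus step-case implications) is provable by the component proofs and whose consequent $\forall x.\,\mathcal{I}(\fat{\nu_0}[n\setminus x])$ follows by induction over $\omega$, which is sound in the standard model since the $\omega$-sort consists exactly of the numerals.

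The main obstacle I anticipate is the bookkeeping connecting the evaluation function $\mathcal{I}_{\mathcal{S}\mathit{i}\mathbf{LK}}$ to the schema it translates to, especially handling the auxiliary components introduced by $\curvearrowright$-inferences and the implied ordering on closed component groups. Each such auxiliary component contributes a conjunct of the form $\mathcal{I}(\fat{\nu_i}[n\setminus x])\to\mathcal{I}(\fat{\nu_i}[n\setminus(x+1)])$ to the antecedent, and I must confirm that the $\curvearrowright$-rule's substitution $[\bar{y}\setminus\bar{t}]$ and the primitive recursive reindexing functions $g,h$ are respected so that the links in the translated schema are well-formed $\{n\}$-links in the sense of the proof schema definition. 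Concretely, the delicate point is that an auxiliary component is closed before it is linked, so its contribution to $\mathcal{I}_{\mathcal{S}\mathit{i}\mathbf{LK}}$ is already a validated induction hypothesis; I would make this precise by inducting on the implied ordering, taking the lowest (first-closed) components as base cases and showing each subsequent component's end sequent is valid given the validity of all components below it. Once this induction is set up, soundness of the whole collection reduces to finitely many applications of Proposition~\ref{prop:sound}, one per component, assembled according to the ordering.
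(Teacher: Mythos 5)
Your proposal takes essentially the same route as the paper's own proof: transform the $\mathcal{S}\mathit{i}\mathbf{LK}$-proof into a proof schema via the translation of Section~\ref{Sec:Translation} (through pre-proof schema normal form) and then invoke Proposition~\ref{prop:sound} to get validity of the leading component's instances and hence of the evaluation $\mathcal{I}_{\mathcal{S}\mathit{i}\mathbf{LK}}(C)$. The extra bookkeeping you describe (the induction along the implied ordering of closed components and the handling of $\curvearrowright$-links) only fills in details that the paper's three-sentence proof leaves implicit, so the approaches coincide.
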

\begin{proof}
Let $\Phi$ be an $\mathcal{S}\mathit{i}\mathbf{LK}$-proof of $C$. By Section~\ref{Sec:Translation} we can transform $\Phi$ to a pre-proof schema normal form $\Phi'$ and then construct a proof schema $\Psi$ from it. By Proposition~\ref{prop:sound}, we show the validity of the leading component and, therefore, of the evaluation itself, i.e.\ the $\mathcal{S}\mathit{i}\mathbf{LK}$-calculus is sound.
\end{proof}

To show completeness we technically need a conversion from proof schemata to $\mathcal{S}\mathit{i}\mathbf{LK}$-proofs which can be easily derived given the procedure defined in Section~\ref{Sec:Translation}. Due to space constraints we avoid
formally defining the procedure. 
\begin{theorem}[Completeness]
If a close component collection $C$ represents a valid $n$-induction statement then it is  $\mathcal{S}\mathit{i}\mathbf{LK}$-provable. 
\end{theorem}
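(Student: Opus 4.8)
<br>

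The plan is to prove completeness by inverting the translation procedure of Section~\ref{Sec:Translation}, establishing a route from a valid $n$-induction statement back to an $\mathcal{S}\mathit{i}\mathbf{LK}$-proof. First I would make precise what it means for a closed component collection $C$ to \emph{represent a valid $n$-induction statement}: by the evaluation function $\mathcal{I}_{\mathcal{S}\mathit{i}\mathbf{LK}}$, the leading component encodes an induction principle whose basecases $\mathcal{I}(\fat{\pi_i})$ and stepcases $\mathcal{I}(\fat{\nu_i}[n\setminus x])\to\mathcal{I}(\fat{\nu_i}[n\setminus(x+1)])$ together entail $\forall x.\,\mathcal{I}(\fat{\nu_0}[n\setminus x])$. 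Validity of this statement is, by the two Propositions of Section~\ref{sec:evalInter} relating proof schemata and $k$-simple $\mathbf{LKIE}$-proofs, exactly the condition that there exists a $k$-simple $\mathbf{LKIE}$-proof of the associated end sequent $S(n,\bar{x})$, which by Proposition~3.15 of~\cite{ALeitsch2017} yields a proof schema $\Psi$ with that end sequent.

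The core of the argument is then to show that every proof schema $\Psi = \langle \mathbf{C}_1,\ldots,\mathbf{C}_m\rangle$ can be realized as an $\mathcal{S}\mathit{i}\mathbf{LK}$-proof whose ending component collection has leading component matching $C$. I would build this proof by induction on $|\Psi| = m$, processing components in reverse order of the proof schema's dependency ordering (innermost/rightmost first), which mirrors the right-to-left construction already used in the decidability and soundness theorems. For each component $\mathbf{C}_i=(\psi,\pi,\nu(k))$ the plan is: replay the $\emptyset$-link proof $\pi$ using $Ax_1{:}r$, $Ax_2{:}r$, the $\rho^{bc}_j$ rules and $\mathcal{E}$-inferences to build the basecase, then close it with $cl_{bc}$; replay the $\{n\}$-link proof $\nu(k)$ using $Ax{:}l$, $br$, the $\rho^{sc}_j$ rules, translating each internal $\{n\}$-link to $\mathbf{C}_i$ itself via the $\circlearrowright$-rule and each link to a previously constructed $\mathbf{C}_j$ (with $j>i$) via the $\curvearrowright$-rule against the already-closed component group; and finally close the component group with $cl_{sc}$. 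The correspondence between the link form $S''(t,\bar b)$ in the proof schema definition and the substitution pattern $[n\setminus g(n)][\bar y\setminus \bar t]$ in the $\curvearrowright$-rule is precisely what makes each link replayable. Since $\mathbf{C}_i.3$ only links to $\mathbf{C}_i.1$ or to $\mathbf{C}_j.1$ with $j>i$, every $\curvearrowright$ application targets an already-closed group, so the ordering constraint of the calculus is respected and the induction hypothesis applies.

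The main obstacle I anticipate is twofold. The first difficulty is bookkeeping for the closure mechanism: the $\circlearrowright$-rule can only introduce the specific stepcase sequent $(\Pi\vdash^{(n+1)}\Delta)[n\setminus n][\bar x\setminus \bar t]$ fixed by the already-closed basecase, so I must verify that the end sequent of $\nu(k)$ in the proof schema, after normalizing defined symbols via $\mathcal{E}$, matches this rigidly prescribed form; this requires the identification of formula occurrences in end sequents assumed in the proof schema definition. The second and harder difficulty is that $k$-simplicity is essential and must be tracked throughout: the restriction $\mathcal{V}(t)\subseteq\{n\}$ on links, together with the single-free-parameter convention, is exactly what guarantees the translated links stay within the expressive range of the $\circlearrowright$- and $\curvearrowright$-rules, whose numeric expressions are confined to $\{n\}$. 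I would therefore state and use as a lemma that the $k$-simplicity of $\Psi$ (inherited from the $k$-simple $\mathbf{LKIE}$-proof) ensures each link's numeric argument is expressible by a primitive recursive $g$ or $h$ admissible in the $\curvearrowright$-rule, so that the reverse translation never requires a multi-parameter link that the calculus cannot produce.
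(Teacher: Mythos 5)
Your proposal follows exactly the paper's route: a valid $n$-induction statement yields a $k$-simple $\mathbf{LKIE}$-proof, which by the results of Section~\ref{sec:evalInter} yields a proof schema, which is then converted to an $\mathcal{S}\mathit{i}\mathbf{LK}$-proof by reversing the translation of Section~\ref{Sec:Translation}. In fact you give strictly more detail than the paper, whose proof simply asserts that the translation procedure ``is quite obvious[ly] reversible,'' whereas you sketch the reverse translation by induction on the number of components (right-to-left, replaying basecase and stepcase proofs and handling links via $\circlearrowright$ and $\curvearrowright$ against already-closed groups) and correctly flag the closure-mechanism bookkeeping and $k$-simplicity tracking as the points needing verification.
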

\begin{proof}
By the theorems and definitions of Section~\ref{sec:evalInter}, we know that if $C$ represents a valid $n$-induction statement then a proof can be found in the $\mathbf{LKIE}$-calculus. Any $\mathbf{LKIE}$-proof can be transformed into a proof schema $\Phi$ (Section~\ref{sec:evalInter}). We have not shown that $\Phi$ can be transformed into $\mathcal{S}\mathit{i}\mathbf{LK}$-proofs, but it is quite obvious that the procedure defined in Section~\ref{Sec:Translation} is reversible. Thus, there is a $\mathcal{S}\mathit{i}\mathbf{LK}$-proof for $C$. 
\end{proof}

\section{Conclusion}
\label{sec:Conclusion}

In this paper we introduce a calculus for the construction of proof schemata which we refer to as the calculus $\mathcal{S}\mathit{i}\mathbf{LK}$. Initially, proof schemata were formalized by first defining an extension of the calculus \textbf{LK}, the calculus \textbf{LKS}, which adds so called links to the set of allowed axioms and an equational theory rule. Using this extended calculus a formal definition for proof schemata was developed~\cite{CDunchev2014}. The problem with this approach is that the calculus \textbf{LKS} is only a calculus for defining proof schemata not a calculus for proof schemata. Prior to the work described in this paper a calculus for proof schemata did not exists. 

%Though investigating proof schemata without a calculus is possible and some interesting results have been achieved~\cite{CDunchev2014,ALeitsch2017}, some questions remain hard to approach; for example, considering more than one parameter. Our $\mathcal{S}\mathit{i}\mathbf{LK}$-calculus reduces the complexity of the formalism by introducing normal forms and an ``eigenvariable-like'' rule for the introduction of links. We expect that this simplification of the theory to allow further investigations into extensions of the current formalism and allow those not familiar with the theory easier access. Furthermore, we plan to investigate the consequences of these results for the current theory concerning cut-elimination in the presence of induction~\cite{CDunchev2014,ALeitsch2017}.  Also, we are interested in the usage of proof schemata as a method of proof storage given that they are compact and allow access to important information stored in the proof. This is precisely what gives schemata an edge over other formalisms of inductive argumentation. \bigskip \\
%\textbf{Acknowledgements:} \\
%%
%We would like to thank Alexander Maletzky and Francesco Antonio Genco for proof reading and helpful contributions. 

\bibliographystyle{plain}
\bibliography{References}

\begin{thebibliography}{10}

\bibitem{JAndreoli1992}
Jean{-}Marc Andreoli.
\newblock Logic programming with focusing proofs in linear logic.
\newblock {\em J. Log. Comput.}, 2(3):297--347, 1992.

\bibitem{VAravantinos2009}
Vincent Aravantinos, Ricardo Caferra, and Nicolas Peltier.
\newblock A schemata calculus for propositional logic.
\newblock In {\em TABLEAUX'09}, volume 5607 of {\em Lecture Notes in Comp.
  Sci.}, pages 32--46. Springer, 2009.

\bibitem{VAravantinos2010}
Vincent Aravantinos, Ricardo Caferra, and Nicolas Peltier.
\newblock A decidable class of nested iterated schemata.
\newblock In {\em Proceedings of IJCAR'10}, pages 293--308. Springer, 2010.

\bibitem{VAravantinos2011}
Vincent Aravantinos, Ricardo Caferra, and Nicolas Peltier.
\newblock Decidability and undecidability results for propositional schemata.
\newblock {\em Journal of Artificial Intelligence Research}, 40(1):599--656,
  January 2011.

\bibitem{VAravantinos2011a}
Vincent Aravantinos, Ricardo Caferra, and Nicolas Peltier.
\newblock Linear temporal logic and propositional schemata, back and forth.
\newblock In {\em Proceedings of TIME '11}, pages 80--87. IEEE, 2011.

\bibitem{VAravantinos2013}
Vincent Aravantinos, Mnacho Echenim, and Nicolas Peltier.
\newblock A resolution calculus for first-order schemata.
\newblock {\em Fundamenta Informaticae}, 2013.

\bibitem{VAravantinos2011b}
Vincent Aravantinos and Nicolas Peltier.
\newblock Schemata of smt-problems.
\newblock In {\em Proceedings of TABLEAUX'11}, pages 27--42. Springer-Verlag,
  2011.

\bibitem{MBaaz2008a}
Matthias Baaz, Stefan Hetzl, Alexander Leitsch, Clemens Richter, and Hendrik
  Spohr.
\newblock Ceres: An analysis of {F}\"{u}rstenberg's proof of the infinity of
  primes.
\newblock {\em Theoretical Computer Science}, 403(2-3):160--175, August 2008.

\bibitem{MBaaz2000}
Matthias Baaz and Alexander Leitsch.
\newblock Cut-elimination and redundancy-elimination by resolution.
\newblock {\em Journal of Symbolic Computation}, 29:149--176, 2000.

\bibitem{GBoolos1984}
George Boolos.
\newblock Don't eliminate cut.
\newblock {\em Journal of Philosophical Logic}, 13(4):373--378, 1984.

\bibitem{JBrotherston2005}
James Brotherston.
\newblock Cyclic proofs for first-order logic with inductive definitions.
\newblock In {\em Tableaux'05}, volume 3702 of {\em Lecture Notes in Comp.
  Sci.}, pages 78--92. Springer, 2005.

\bibitem{JBrotherston2010}
James Brotherston and Alex Simpson.
\newblock Sequent calculi for induction and infinite descent.
\newblock {\em Journal of Logic and Computation}, 2010.

\bibitem{DCerna2014}
David Cerna.
\newblock A tableaux-based decision procedure for multi-parameter propositional
  schemata.
\newblock In {\em Intelligent Computer Mathematics}, volume 8543 of {\em
  Lecture Notes in Comp. Sci.}, pages 61--75. Springer, 2014.

\bibitem{DCerna2016}
David~M. Cerna and Alexander Leitsch.
\newblock Schematic cut elimination and the ordered pigeonhole principle.
\newblock In {\em Automated Reasoning - 8th International Joint Conference,
  {IJCAR} 2016, Coimbra, Portugal, June 27 - July 2, 2016, Proceedings}, pages
  241--256, 2016.

\bibitem{CDunchev2014}
Cvetan Dunchev, Alexander Leitsch, Mikheil Rukhaia, and Daniel Weller.
\newblock Cut-elimination and proof schemata.
\newblock {\em Journal of Language, Logic, and Computation}, 2014.

\bibitem{GGentzen1969}
Gerhard Gentzen.
\newblock Fusion of several complete inductions.
\newblock In M.E. Szabo, editor, {\em The Collected Papers of Gerhard Gentzen},
  volume~55 of {\em Studies in Logic and the Foundations of Mathematics}, pages
  309 -- 311. Elsevier, 1969.

\bibitem{SHetzl2008}
Stefan Hetzl, Alexander Leitsch, Daniel Weller, and Bruno~Woltzenlogel Paleo.
\newblock Herbrand sequent extraction.
\newblock In {\em Intelligent Computer Mathematics}, pages 462--477. Springer,
  2008.

\bibitem{ALeitsch2017}
Alexander Leitsch, Nicolas Peltier, and Daniel Weller.
\newblock Ceres for {F}irst-{O}rder {S}chemata.
\newblock {\em Journal of Logic and Computation}, 2017.
\newblock To appear.

\bibitem{RMcdowell1997}
Raymond Mcdowell and Dale Miller.
\newblock Cut-elimination for a logic with definitions and induction.
\newblock {\em Theoretical Computer Science}, 232, 1997.

\bibitem{DMiller1987}
Dale~A. Miller.
\newblock A compact representation of proofs.
\newblock {\em Studia Logica}, 46(4):347--370, 1987.

\bibitem{VOrevkov1991}
V.P. Orevkov.
\newblock Proof schemata in {H}ilbert-type axiomatic theories.
\newblock {\em Journal of Soviet Mathematics}, 55(2):1610--1620, 1991.

\bibitem{RRobinson1950}
R.~M. Robinson.
\newblock An essentially undecidable axiom system.
\newblock In {\em Proceedings of the International Congress of Mathematics},
  pages 729--730, 1950.

\bibitem{GTakeuti1975}
Gaisi Takeuti.
\newblock {\em Proof Theory}, volume~81 of {\em Studies in logic and the
  foundations of mathematics}.
\newblock American Elsevier Pub., 1975.

\end{thebibliography}

\appendix
\section{Appendix}
\label{appendix}

If we refer to the \LK-calculus we mean the following version:
\begin{center}
\begin{tabular}{c}
\begin{minipage}{0.45\textwidth}
\begin{small}
\begin{prooftree}
\AxiomC{$A,B,\Gamma\vdash\Delta$}
\RightLabel{$\wedge :l$}
\UnaryInfC{$A\wedge B,\Gamma\vdash\Delta$}
\end{prooftree}
\end{small}
\end{minipage}
\begin{minipage}{0.45\textwidth}
\begin{small}
\begin{prooftree}
\AxiomC{$\Gamma\vdash\Delta ,A$}
\AxiomC{$\Pi\vdash\Lambda ,B$}
\RightLabel{$\wedge :r$}
\BinaryInfC{$\Gamma ,\Pi\vdash\Delta ,\Lambda ,A\wedge B$}
\end{prooftree}
\end{small}
\end{minipage}
\\
\begin{minipage}{0.45\textwidth}
\begin{small}
\begin{prooftree}
\AxiomC{$A,\Gamma\vdash\Delta$}
\AxiomC{$B,\Pi\vdash\Lambda$}
\RightLabel{$\vee :l$}
\BinaryInfC{$A\vee B,\Gamma ,\Pi\vdash\Delta ,\Lambda$}
\end{prooftree}
\end{small}
\end{minipage}
\begin{minipage}{0.45\textwidth}
\begin{small}
\begin{prooftree}
\AxiomC{$\Gamma\vdash\Delta ,A,B$}
\RightLabel{$\vee :r$}
\UnaryInfC{$\Gamma\vdash\Delta ,A\vee B$}
\end{prooftree}
\end{small}
\end{minipage}
\\
\begin{minipage}{0.45\textwidth}
\begin{small}
\begin{prooftree}
\AxiomC{$A,\Gamma\vdash\Delta$}
\RightLabel{$\neg :l$}
\UnaryInfC{$\Gamma\vdash\Delta ,\neg A$}
\end{prooftree}
\end{small}
\end{minipage}
\begin{minipage}{0.45\textwidth}
\begin{small}
\begin{prooftree}
\AxiomC{$\Gamma\vdash\Delta ,A$}
\RightLabel{$\neg :r$}
\UnaryInfC{$\neg A ,\Gamma\vdash\Delta$}
\end{prooftree}
\end{small}
\end{minipage}
\\
\begin{minipage}{0.45\textwidth}
\begin{small}
\begin{prooftree}
\AxiomC{$\Gamma\vdash\Delta ,A$}
\AxiomC{$B, \Pi\vdash\Lambda$}
\RightLabel{$\to :l$}
\BinaryInfC{$A\to B,\Gamma ,\Pi\vdash\Delta ,\Lambda$}
\end{prooftree}
\end{small}
\end{minipage}
\begin{minipage}{0.45\textwidth}
\begin{small}
\begin{prooftree}
\AxiomC{$A,\Gamma\vdash\Delta ,B$}
\RightLabel{$\to :r$}
\UnaryInfC{$\Gamma\vdash\Delta ,A\to B$}
\end{prooftree}
\end{small}
\end{minipage}
\\
\begin{minipage}{0.3\textwidth}
\begin{small}
\begin{prooftree}
\AxiomC{$ \ $}
\RightLabel{$Ax$}
\UnaryInfC{$A\vdash A$}
\end{prooftree}
\end{small}
\end{minipage}
\begin{minipage}{0.3\textwidth}
\begin{small}
\begin{prooftree}
\AxiomC{$A,A,\Gamma\vdash\Delta$}
\RightLabel{$c:l$}
\UnaryInfC{$A,\Gamma\vdash\Delta$}
\end{prooftree}
\end{small}
\end{minipage}
\begin{minipage}{0.3\textwidth}
\begin{small}
\begin{prooftree}
\AxiomC{$\Gamma\vdash\Delta ,A,A$}
\RightLabel{$c:r$}
\UnaryInfC{$\Gamma\vdash\Delta ,A$}
\end{prooftree}
\end{small}
\end{minipage}
\\
\begin{minipage}{0.4\textwidth}
\begin{small}
\begin{prooftree}
\AxiomC{$\Gamma\vdash\Delta ,A$}
\AxiomC{$A,\Pi\vdash\Lambda$}
\RightLabel{$cut$}
\BinaryInfC{$\Gamma ,\Pi\vdash\Delta ,\Lambda$}
\end{prooftree}
\end{small}
\end{minipage}
\begin{minipage}{0.25\textwidth}
\begin{small}
\begin{prooftree}
\AxiomC{$\Gamma\vdash\Delta$}
\RightLabel{$w:l$}
\UnaryInfC{$A,\Gamma\vdash\Delta$}
\end{prooftree}
\end{small}
\end{minipage}
\begin{minipage}{0.25\textwidth}
\begin{small}
\begin{prooftree}
\AxiomC{$\Gamma\vdash\Delta$}
\RightLabel{$w:r$}
\UnaryInfC{$\Gamma\vdash\Delta ,A$}
\end{prooftree}
\end{small}
\end{minipage}
\\
\begin{minipage}{0.45\textwidth}
\begin{small}
\begin{prooftree}
\AxiomC{$A[x\backslash\alpha ],\Gamma\vdash\Delta$}
\RightLabel{$\forall :l$}
\UnaryInfC{$\forall x.A,\Gamma\vdash\Delta$}
\end{prooftree}
\end{small}
\end{minipage}
\begin{minipage}{0.45\textwidth}
\begin{small}
\begin{prooftree}
\AxiomC{$\Gamma\vdash\Delta ,A[x\backslash t]$}
\RightLabel{$\forall :r$}
\UnaryInfC{$\Gamma\vdash\Delta ,\forall x.A$}
\end{prooftree}
\end{small}
\end{minipage}
\\
\begin{minipage}{0.45\textwidth}
\begin{small}
\begin{prooftree}
\AxiomC{$A[x\backslash t],\Gamma\vdash\Delta$}
\RightLabel{$\exists :l$}
\UnaryInfC{$\exists x.A,\Gamma\vdash\Delta$}
\end{prooftree}
\end{small}
\end{minipage}
\begin{minipage}{0.45\textwidth}
\begin{small}
\begin{prooftree}
\AxiomC{$\Gamma\vdash\Delta ,A[x\backslash\alpha ]$}
\RightLabel{$\exists :r$}
\UnaryInfC{$\Gamma\vdash\Delta ,\exists x.A$}
\end{prooftree}
\end{small}
\end{minipage}
\end{tabular}
\end{center}
where $\alpha$ is a fresh variable not occurring in $\Gamma ,\Delta ,$ or $A$. 
\end{document}